\pdfoutput=1
\documentclass[11pt]{extarticle}

\usepackage[left=1in,right=1in,top=1in,bottom=1.3in]{geometry}
\usepackage{amsmath, amssymb, amsthm, latexsym, indentfirst, amsfonts, xcolor, mathtools, manfnt, microtype, soul, subcaption}
\usepackage[utf8]{inputenc} 
\usepackage[english]{babel}

\usepackage[breaklinks,backref=page]{hyperref}
\usepackage{cleveref}
\hypersetup{
	colorlinks = true, % Colours links instead of ugly boxes
	urlcolor = cyan, % Colour for external hyperlinks
	linkcolor = teal, % Colour of internal links
	citecolor = cyan % Colour of citations
}

\usepackage{tikz} % for figures

\renewcommand*{\backref}[1]{}
\renewcommand*{\backrefalt}[4]{
	\ifcase #1 \textcolor{red}{Not cited.}%
	\or $\uparrow$#2%
	\else $\uparrow$#2%
	\fi%
}
\let\OLDthebibliography\thebibliography
\renewcommand\thebibliography[1]{
	\OLDthebibliography{#1}
	\setlength{\parskip}{0pt}
	\setlength{\itemsep}{0pt plus 0.3ex}
}

\newtheorem{Theorem}{Theorem}

\newtheorem{Corollary}{Corollary}

\newtheorem{Construction}{Construction}

\newtheorem{Lemma}{Lemma}

\newtheorem{Question}{Question}
\newtheorem{Proposition}{Proposition}

\newcommand{\sm}{\!\setminus\!}

\newcommand{\N}{\mathbb N}

\newcommand{\ex}{{\rm ex}}

\newcommand{\x}{\mathbf x}
\newcommand{\y}{\mathbf y}
\newcommand{\z}{\mathbf z}
\newcommand{\aaa}{\mathbf a}
\newcommand{\cH}{\mathcal H}
\newcommand{\cF}{\mathcal F}

\title{Vertex-Ramsey theorems for Cartesian powers of graphs}

\author{N\'ora Alm\'asi\thanks{Budapest University of Technology and Economics, Alfr\'ed R\'enyi Institute of Mathematics, Budapest, Hungary, \href{mailto:almasi.nora@cs.bme.hu}{\tt almasi.nora@cs.bme.hu}.}
\and Maria Axenovich\thanks{Karlsruhe Institute of Technology, Karlsruhe, Germany, \href{mailto:maria.aksenovich@kit.edu}{\tt maria.aksenovich@kit.edu}.}
\and Arsenii Sagdeev\thanks{Karlsruhe Institute of Technology, Karlsruhe, Germany, \href{mailto:sagdeevarsenii@gmail.com}{\tt sagdeevarsenii@gmail.com}.}}
\date{}

\begin{document}

\maketitle

\begin{abstract}
    For graphs $G,H$ and positive integers $r$ and $n$ we write $G^{\square n} \xrightarrow{r} H$  if every $r$-vertex-coloring of the Cartesian power $G^{\square n}$ of $G$ contains a monochromatic copy  of $H$. Since chromatic number $\chi$ of $G^{\square n}$ is the same as $\chi(G)$, there is an $r$-vertex coloring of $G^{\square n}$ for $r=\chi(G)$, such that each color class is an independent set. 
    We prove that for $r<\chi(G)$ there is a large class of graphs $H$ such that $G^{\square n} \xrightarrow{r} H$. These graphs are so-called layered graphs in a hypercube. We also show that for some graphs $G$, such as for example odd cycles or cliques,  the class of layered graphs $H$ is the only one satisfying the above Ramsey property when $\chi(G)/2 < r < \chi(G)$.
    In addition, we prove a more general result relating Ramsey properties of $G$ and graphs $H$ such that $G^{\square n} \xrightarrow{r} H$. One of the technical tools is a Ramsey-type statement for discrete cubes $[m]^n$ that we call the Cube Layered Lemma, which is of independent interest. One of the original motivations for studying Ramsey properties of Cartesian powers of $G$ is the fact that $G^{\square n}$ is a unit distance graph if $G$ is a unit distance graph. This provides applications in Euclidean Ramsey theory.
\end{abstract}

%%%%%%%%%%%%%%%%%%%%%%%%%%%%%%%%%
%%%%%%%%%%%%%%%%%%%%%%%%%%%%%%%%%
\section{Introduction}
%%%%%%%%%%%%%%%%%%%%%%%%%%%%%%%%%
%%%%%%%%%%%%%%%%%%%%%%%%%%%%%%%%%

Ramsey theory, originating from a work by Ramsey \cite{R}, studies unavoidable structures in the partition of discrete objects. In this paper, we consider vertex-Ramsey properties of Cartesian powers of graphs. Given two graphs $G$ and $H$, their \emph{Cartesian product $G\square H$} is defined as the graph on the vertex set $V(G)\times V(H)$, where the vertices $(x_1,x_2),(y_1,y_2)$ form an edge in $G\square H$ if and only if either $x_1y_1\in E(G)$ and $x_2=y_2$, or $x_1=y_1$ and $x_2y_2\in E(H)$. Let $G^{\square 1}=G$ and for $n\geq 2$, let $G^{\square n}= G^{\square n-1} \square G$ be the $n^{\mathrm{th}}$ {\it Cartesian power} of $G$. Note that $K_2^{\square n}$ is a binary {\it hypercube} that is sometimes denoted $Q_n$. Letting $V(K_2)=\{0,1\}$, an $i$th  {\it vertex-layer} of $Q_n$ is a set of all vertices with exactly $i$ ones. The $i$th {\it edge-layer} of $Q_n$ is its subgraph induced by the $i$th and $(i+1)$st vertex layers. A graph $G$ is called {\it layered} if it is a subgraph of an edge-layer of some hypercube. For properties of layered graphs, see for example \cite{AMW, AS, BLMW}.

For graphs $G,H$ and $r,n\in \N$ we write $G^{\square n} \xrightarrow{r} H$  if every $r$-coloring of the vertices of $G^{\square n}$ contains a monochromatic copy  of $H$. If $n=1$, we simply write $G \xrightarrow{r} H$. 
The chromatic number $\chi(H)$ of a graph $H$ is the smallest integer $r+1$ such that $H \xrightarrow{r} K_2$.
We consider graphs $G$ and $H$ such that for sufficiently large $n$ and an integer $r$,  $G^{\square n} \xrightarrow{r} H$. Our goal is to understand the relation between $G$, $H$, and $r$. Observe first that if $r\ge \chi(G)$, then $G^{\square n} \not \xrightarrow{r} K_2$ because $\chi(G)= \chi(G^{\square n})$ for any $n\ge 1$. So the problem is non-trivial only if $r<\chi(G)$. Our first result deals with the first non-trivial case $\chi(G)=r+1$.
 
 Our main result provides a large class of graphs $H$ such that  $G^{\square n} \xrightarrow{r} H$. 

\begin{Theorem} \label{arrow_layered}
    Let $H$ be a layered graph, $r$ be an integer,  and $G$ be a graph of chromatic number $r+1$. Then there exists $n \in \N$ such that $G^{\square n} \xrightarrow{r} H$.
\end{Theorem}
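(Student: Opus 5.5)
The plan is to pass from an arbitrary $r$-coloring of $G^{\square n}$ to a very structured one via a Hales--Jewett/Graham--Rothschild type argument, which is presumably the role of the Cube Layered Lemma mentioned in the abstract. Then I would use $\chi(G)=r+1$, i.e.\ $G\xrightarrow{r}K_2$, to extract the monochromatic layered structure.

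\emph{Setup.} Identify $V(G)$ with $[m]$, $m=|V(G)|$, so that $V(G^{\square n})=[m]^n$. Two words are adjacent exactly when they differ in one coordinate and the two letters there are adjacent in $G$. Fix $K$ and $k$ with $H\subseteq$ the $k$th edge-layer of $Q_K$. A combinatorial subspace of $[m]^n$ with $K$ disjoint variable blocks $B_1,\dots,B_K$ (constant letters elsewhere) gives, for every assignment of letters to the blocks, a point of $[m]^n$. Two assignments differing in exactly one block, by adjacent letters $x,y$ of $G$, give points that differ in one coordinate per element of that block. So they are \emph{not} adjacent unless $|B_i|=1$. I would therefore aim for a version of the lemma with singleton variable blocks, possibly after first passing to a sub-configuration. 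Failing that, I would use the richer structure of $G^{\square n}$ to connect such points by an edge.

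\emph{Key steps.}
\begin{enumerate}
\item Prove, or invoke as the Cube Layered Lemma, the following. For every $r$-coloring of $[m]^n$ with $n$ large there are $N$ coordinates, and a fixed assignment of the remaining coordinates, such that on the resulting copy of $[m]^N$ the color of a word depends only on its \emph{type}, i.e.\ the vector counting how many coordinates carry each letter. This would come from iterating the pigeonhole principle over coordinates, in the style of the proofs of Gallai's or Graham--Rothschild's theorem, with $N$ much larger than $K$.
\item Reduce to a coloring of $G$. Fix a base letter $w$ and let $N=K$. Define $\psi(x)$, for $x\in V(G)$, as the common color of words of a suitable type mixing $x$ and $w$, chosen so that words coming from layers $k$ and $k+1$ of $Q_K$ are governed by the same value of $\psi$.
\item Since $\psi$ is an $r$-coloring of $G$ and $\chi(G)=r+1$, there is an edge $xy$ with $\psi(x)=\psi(y)$.
\item Embed $Q_K$ by sending $0\mapsto x$ and $1\mapsto y$ coordinatewise. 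This is a subgraph copy of $Q_K$ in $G^{\square K}$, since $xy\in E(G)$. Its layers $k$ and $k+1$ consist of words of two types, and Step 2 is designed so that both types get the same color. The copy of $H$ inside this edge-layer is then monochromatic.
\end{enumerate}

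\emph{Main obstacle.} The delicate point is Step 2. We need the \emph{two} types of layer $k$ and layer $k+1$ to receive the same color, and a naive pigeonhole along the chain of types $(j$ copies of $y$, $K-j$ copies of $x)$ cannot force equality of \emph{consecutive} colors, since the colors may alternate. What I would try first is to build $\psi$ on $G$ itself from the layer-$k$ color of the pair $(x,w)$ and the layer-$(k+1)$ color of $(w,x)$ in a combined way. If needed, I would apply the chromatic hypothesis not to $G$ but to $G^{\square 2}$ (which also has chromatic number $r+1$), so that a monochromatic edge there encodes exactly the change of one letter between the two relevant types.
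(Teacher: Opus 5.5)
\textbf{There is a genuine gap, and it lies earlier than where you put it.}

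Your Step~1 is false for $m\ge 3$. For $m=2$ it is exactly the paper's remark: vertex layers of a subcube are monochromatic. For $m\ge3$, two-colour $[3]^n$ by the parity of the number of pairs $i<j$ with $x_i=2$, $x_j=3$.

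\begin{itemize}
\item Swapping a $2$ and a $3$ at two variable positions $s<s'$ of a subcube changes this parity by $1$ plus the number of positions strictly between $s$ and $s'$ that carry a letter from $\{2,3\}$.
\item Take three variable positions $s_1<s_2<s_3$ and set the other variable positions to $1$. Let $a$ and $b$ be the numbers of fixed entries from $\{2,3\}$ in $(s_1,s_2)$ and in $(s_2,s_3)$.
\item The three possible swaps change the parity by $1+a$, $1+b$ and $1+a+b$. These cannot all be even, so no subcube of dimension at least $3$ has colour depending only on type.
\end{itemize}

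What is true is the paper's Cube Layered Lemma, which is much weaker. On a subcube whose fixed coordinates all equal a chosen letter $t$, the colour depends only on the word obtained by deleting all $t$'s, with the order of the other letters kept. This gives layer-symmetry only on two-letter alphabets $\{v,t\}$ that contain the base letter.

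Incidentally, if your Step~1 were true, Step~2 would be easy. Put $\psi(z)$ equal to the colour of the word made of $k-1$ copies of every letter followed by $z$; recall every layered graph lies in the middle layer $L_k$ of some $Q_{2k-1}$. A $\psi$-monochromatic edge $xy$ then gives the two types of that middle layer the same colour, on the $2k-1$ positions carrying $x$ or $y$.

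\textbf{Steps 2--4 do not connect.} With the correct lemma and a single base letter $w$, $\psi(x)$ and $\psi(y)$ are read off from words over $\{x,w\}$ and $\{y,w\}$. Step~4, however, needs control of words over $\{x,y\}$. The $\psi$-monochromatic edge produced in Step~3 need not contain $w$, and then $w$-equivalence says nothing about $\{x,y\}$-words.

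\textbf{The missing idea is to process the edges $v_jw_j$ of $G$ one at a time.} Each edge gets its own base letter $t=w_j$, and one works inside nested subcubes $M_1\supseteq M_2\supseteq\cdots$.

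\begin{itemize}
\item In step $j$, apply the lemma with $t=w_j$ inside $M_{j-1}$.
\item Split the resulting position set $S_j'$ into segments of $2k-1$ consecutive positions.
\item Fix the first $2k-2$ positions of each segment to $v_j^{k-1}w_j^{k-1}$, and keep only the last position variable.
\item Suppose the edge $v_jw_j$ were monochromatic in some slice of $M_j$. Then the words $v_j^{k-1}w_j^{k-1}v_j$ and $v_j^{k-1}w_j^{k-1}w_j$ on that segment share a colour.
\item By $w_j$-equivalence, the whole middle layer $L_k$ on $\{v_j,w_j\}^{2k-1}$ in that segment then shares this colour, giving a monochromatic $H$.
\end{itemize}

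After all edges are processed, any $G$-slice of the final subcube is an $r$-coloured copy of $G$ with no monochromatic edge. This contradicts $\chi(G)=r+1$. Your fallback via $G^{\square 2}$ is not developed far enough to replace this iteration.
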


We further show that for graphs  $G$ such as a complete graph or an odd cycle, being layered is a characterization of graphs $H$ such that $G^{\square n} \xrightarrow{r} H$. Note that Cartesian powers of cycles are usually referred to as \textit{toroidal grids}, and this object is important for eliminating edge effects in simulations, optimizing parallel computer architectures, and modeling biological brain maps. The Cartesian powers of cliques are called {\it Hamming graphs}, with many applications in coding theory and algebraic combinatorics.

\begin{Proposition}\label{prop:necessary}
     Let $s, r,$ and $n$ be positive integers, such that $s/2<r<s$.
      If $C_{2s+1} ^{\square n} \xrightarrow{2} H$ or $K_s^{\square n} \xrightarrow{r} H$, then $H$ is layered.
 \end{Proposition}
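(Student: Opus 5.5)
The plan is to prove the contrapositive by an explicit coloring. For each host graph, I will construct a coloring of $C_{2s+1}^{\square n}$ with $2$ colors, respectively of $K_s^{\square n}$ with $r$ colors, in which every color class induces a layered graph. Since a subgraph of a layered graph is layered by definition, any monochromatic copy of $H$ then lies inside a layered graph, so $H$ is layered. I will also use that a disjoint union of layered graphs is layered. To see this, embed the components into edge-layers of disjoint coordinate blocks of one large hypercube. Then pad each block by a fixed number of extra $1$'s, so that all components occupy the same pair of vertex-layers.

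For $K_s^{\square n}$, identify $V(K_s)$ with $\mathbb Z_s$ and let $\sigma(\x)=\sum_i x_i \bmod s$. Changing one coordinate changes $\sigma$ by a nonzero residue, so the $s$ classes $\sigma^{-1}(c)$ are independent sets. Since $s/2<r<s$, the cyclically ordered residues can be partitioned into $r$ arcs, each a singleton or a pair $\{c,c+1\}$; I color $\x$ by the arc containing $\sigma(\x)$. A singleton class is independent, hence trivially layered. For a pair class, the only edges join $\x$ with $\sigma(\x)=c$ to $\x+e_i$ with $\sigma=c+1$, where $e_i$ is the $i$th unit vector and $x_i\mapsto x_i+1$ is taken mod $s$. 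So the class is a bipartite ``one step of the directed torus'' graph, with a canonical lower side and upper side.

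For $C_{2s+1}^{\square n}$ with $2$ colors, I would follow the same scheme. Take $\sigma(\x)=\sum x_i \bmod (2s+1)$, and group residues so that within each color class every edge goes from a designated lower residue to the residue one above it. The classes should be unions of such ``consecutive pairs'' separated by gaps. If a plain two-arc split produces chains of three or more consecutive residues, I will refine it using the coordinate structure of the torus, for instance by weighting coordinates or using a second statistic, so that each class again decomposes into one-step bipartite slices. I would verify the counting ($2s+1$ residues, $2$ colors) carefully, since this is exactly where a hypothesis like $s/2<r$ enters in the clique case.

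The main obstacle is proving that a one-step slice is layered. This requires an injective map $\phi$ from the slice into some $Q_N$ such that lower vertices land in one vertex-layer, upper vertices in the next, and every edge is a single-bit addition. The first thing I would try is to encode a vertex by the multiset of ``$+1$ steps'' taken from a fixed base point, with coordinates of $Q_N$ indexed by pairs $(i,v)\in[n]\times\mathbb Z_s$. A lower vertex $\x$ would map to a set $S(\x)$ of fixed size determined by $c$, and an upper vertex $\y$ to $\bigcup_i S(\y-e_i)$. The needed check is that this union has size exactly $|S(\x)|+1$ and that $\phi$ is injective. The wrap-around step $s-1\mapsto 0$ is where naive unary encodings fail. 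If this direct encoding breaks, I would instead use an edge-labelling characterization of layered graphs: label edge $\x(\x+e_i)$ by $(i,x_i)$ and check the consistency condition on $4$-cycles and paths that such a characterization requires.
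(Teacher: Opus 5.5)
\textbf{There is a genuine gap.} It is exactly the step you call the main obstacle. The ``one-step torus'' classes are in general \emph{not} layered, so no choice of encoding can work.

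\textbf{The counterexample.} Take $n\ge 3$ and fix all coordinates except the first three; this only shifts $c$. Inside the class $\sigma^{-1}(\{c,c+1\})$ consider the subgraph $B$ on $\{\x\in\mathbb Z_s^3: x_1+x_2+x_3\in\{c,c+1\}\}$ with edges $\x\sim\x+e_j$. This is a toroidal hexagonal grid, and it lies in $K_s^{\square n}$ because $x_j\neq x_j+1$.
\begin{itemize}
\item In an edge-layer of a hypercube, every $6$-cycle has the form $S\cup\{d_i\}$, $S\cup\{d_i,d_j\}$ for three bits $d_1,d_2,d_3$.
\item Apply this to the hexagons $\{\mathbf p+e_i,\ \mathbf p+e_i+e_j\}$ of $B$, where $p_1+p_2+p_3=c-1$. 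It shows that the bit added along $\x\to\x+e_j$ is the same for $\x$ and for $\x+e_i-e_{i'}$, whenever $\{i,i',j\}=[3]$.
\item Hence this bit equals $f_j(x_j)$ for some function $f_j$. The same hexagons also force $f_1(a)\neq f_2(b)$ for all $a,b$.
\item Now follow the walk $\x\to\x+e_1\to\x+e_1-e_2\to\x+2e_1-e_2\to\cdots$. It returns to $\x$ after $2s$ steps, because $s(e_1-e_2)=0$ in $\mathbb Z_s^3$.
\item Along this walk one only adds bits from the image of $f_1$ and only removes bits from the image of $f_2$. So the first added bit is never removed, and the walk cannot close up in the cube.
\end{itemize}
Thus your $K_s$ coloring contains a monochromatic non-layered graph for every $n\ge 3$. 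The wrap-around $s-1\mapsto 0$ you flagged is a real obstruction, not an artifact of the encoding.

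The same graph, with modulus $2s+1$, sits inside the consecutive-pair classes of your $C_{2s+1}$ scheme. There a $c\to c+1$ edge must be $x_i\mapsto x_i+1$. The refinement you sketch for that case is also left unspecified.

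\textbf{What is missing.} You need an \emph{integer-valued} potential, reduced modulo a number larger than any single-step change. Then monochromatic edges change it by exactly $\pm1$, with no wrap-around. The paper does this as follows.
\begin{itemize}
\item For $K_s$, view $\x\in[s]^n$ as an integer vector and color it by which pair $\{2i,2i+1\}$ contains $\|\x\|_1 \bmod 2r$. One step changes $\|\x\|_1$ by between $1$ and $s-1\le 2r-2$. So a monochromatic edge changes one coordinate by exactly $\pm1$, and each component lives on two consecutive integer levels. The unary map $j\mapsto 0^{s-j}1^j$ then embeds each component into an edge-layer of $Q_{sn}$. This, not the length of the arcs, is where $r>s/2$ is really used.
\item For $C_{2s+1}$, take the proper $3$-coloring $c'$ in which color $2$ occurs only at vertex $2$. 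Color by $\|c'(\x)\|_1 \bmod 4$ in the pairs $\{0,1\},\{2,3\}$. Monochromatic edges then project only onto $\{1,2\}$ or $\{2,3\}$. Concatenating a weight-preserving injection $f$ of $[2s+1]$ into a cube, with $f(2)$ adjacent to $f(1)$ and $f(3)$, embeds each component into an edge-layer.
\end{itemize}
Your remark that disjoint unions of layered graphs are layered remains useful for passing from components to a possibly disconnected $H$.
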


Theorem \ref{arrow_layered} strengthens the work of Axenovich, Liu and Sagdeev \cite{ALS}, who proved via Turán-type arguments an analogous statement for graphs $H$ that have zero hypercube Turán density, as well as for the case when $G=K_3$ and $H$ is layered. 
A graph $H$ is of {\it zero hypercube Turán density} if $\ex (Q_n, H) /|E(Q_n)| \rightarrow 0$ as $n$ goes to infinity, where $\ex(Q_n, H)$ is the largest size of a subgraph of $Q_n$ not containing $H$ as a subgraph.
Every graph of zero hypercube Tur\'an density is layered. However, there is a large class of layered graphs that are not of zero hypercube Tur\'an density. For example, these include graphs containing cycles of length $6$ or $10$. \Cref{prop:necessary} answers Question~14 from~\cite{ALS} in a strong form.

Our second result shows that if $ \chi(G)>2r$, then in addition to layered graphs, sufficiently long odd cycles also become unavoidable. We write $G \xrightarrow{r} \{H_1, H_2, \ldots, H_m\}$ if any $r$-coloring of vertices of $G$ contains a monochromatic subgraph isomorphic to $H_i$, for some $i\in \{1, \ldots, m\}$.

\begin{Theorem} \label{arrow_cycles}
    Let $G$ be a graph on $m$ vertices and $r,\ell$ be integers such that $G \xrightarrow{r}\{C_3, C_5, ,\dots, C_{2\ell+1}\}$. Then for every $s\ge\ell+2$ there exists $n\in \N$ such that $G^{\square n} \xrightarrow{r} C_{2s+1}$. In particular, if $\chi(G)>2r$ and $2s+1 \ge m+4$ then there exists $n\in \N$ such that $G^{\square n} \xrightarrow{r} C_{2s+1}$.
\end{Theorem}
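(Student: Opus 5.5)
The ``in particular'' part is immediate, so I would deal with it first. If $\chi(G)>2r$, then in every $r$-coloring of $V(G)$ some color class induces a non-bipartite graph; otherwise $G$ would be $2r$-colorable. That class contains an odd cycle, and its length is at most $m$. So $G\xrightarrow{r}\{C_3,\dots,C_{2\ell+1}\}$ holds with $2\ell+1\le m$. The condition $2s+1\ge m+4$ then gives $s\ge \ell+2$, and the general statement applies. The rest of the plan concerns the general statement.

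The basic mechanism is a \emph{detour} in a product. Suppose $x_0x_1\dots x_{2k}x_0$ is an odd cycle in $G$ and $ab$ is an edge in another coordinate. In the cycle $(x_0,a),\dots,(x_{2k},a)$ we may replace the edge $(x_j,a)(x_{j+1},a)$ by the path $(x_j,a),(x_j,b),(x_{j+1},b),(x_{j+1},a)$. This raises the length by exactly $2$ and keeps the cycle odd. Using fresh coordinates for different detours, and detours inside detours if needed, we can lengthen an odd cycle of length $2k+1\le 2\ell+1$ to exactly $2s+1$ for any $s\ge k$. The only requirement is that all vertices used receive one color. So the target is a monochromatic \emph{ladder-augmented odd cycle}: an odd cycle in one copy of $G$ together with monochromatic ``rungs'' sticking out in new coordinates. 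The slack $s\ge \ell+2$ should let us route at least two detours, or one detour of length $\ge 4$. That extra freedom is needed because a single rung pair $(x_j,b),(x_{j+1},b)$ requires an edge $ab$ in the new coordinate, and forcing that directly costs too many colors.

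The Ramsey step I would carry out as follows. Write $G^{\square n}=G^{\square n_1}\square G^{\square n_2}$. A vertex $v$ of the second factor induces a coloring of the fiber $G^{\square n_1}\times\{v\}$, and there are only finitely many, say $N$, such fiber colorings. First I would use Theorem~\ref{arrow_layered}, or rather the Cube Layered Lemma behind it, in the second factor, with $N$ colors in place of $r$. This should give a layered structure $L$, for instance a long even cycle or a subdivided ladder, all of whose vertices $v$ induce the \emph{same} fiber coloring. One point needs care here: Theorem~\ref{arrow_layered} is stated for $\chi(G)=r+1$. So I would instead invoke the Cube Layered Lemma directly on a cube $[m]^{n_2}$ embedded in $G^{\square n_2}$. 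Inside a single fiber I would then apply the hypothesis $G\xrightarrow{r}\{C_3,\dots,C_{2\ell+1}\}$ to a copy of $G$ and obtain a monochromatic odd cycle $C$ of length $2k+1\le2\ell+1$. Because all fibers over $L$ are colored identically, the copies of $C$ over every vertex of $L$ carry the same color. Moreover, for each edge $vw$ of $L$ and each $x$ on $C$, the pair $(x,v),(x,w)$ is an edge. The union of $C$ with these copies is therefore a monochromatic $C\square L$. Inside $C\square L$ we route $C$ through a path of $L$ of suitable length and obtain a cycle of every odd length $2s+1$ with $s\ge k+1$. Since $k\le \ell$, this certainly covers $s\ge \ell+2$.

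The main obstacle is the uniformization step. We need a layered structure $L$ on which the fiber coloring is constant, where $L$ has edges realized as genuine edges of $G^{\square n_2}$ and contains a path of the required length between two adjacent-in-$L$ positions. The number of colors $N$ is astronomically large, so this must come from a Hales--Jewett/Cube Layered type statement and not from chromatic-number considerations. I would first try to obtain $L$ as a monochromatic edge-layer structure in a sub-cube $\{0,1\}^{n_2}\subset V(G)^{n_2}$, using a fixed edge $ab$ of $G$ in each coordinate. If that fails to give the right parity or length of paths, the fallback is to allow two independent uniformized directions, which is where the extra slack $s\ge\ell+2$ would be spent.
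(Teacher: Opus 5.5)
Your proposal has a genuine gap at the uniformization step. The ``in particular'' reduction is fine and matches the paper.

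\textbf{Why the uniformization fails.} The Cube Layered Lemma (Lemma~\ref{lem:equivalent}), applied in the second factor with the $N$ fiber colorings as colors, only makes each $t$-equivalence class monochromatic. On a binary subcube $\{t,a\}^{S'}$ this means each \emph{vertex} layer carries a single fiber coloring. It says nothing linking two consecutive layers. What you need is an $L$ containing an edge $vw$ whose endpoints carry the same fiber coloring, and no Ramsey argument can produce that. Let $\phi$ be a proper coloring of $G$ with values in $\mathbb{Z}_q$, $q=\chi(G)$, and let $f:\mathbb{Z}_q\to\{0,1\}$ be the indicator of $0$. Then $c(\mathbf{x},v)=f\bigl(\sum_j\phi(x_j)+\sum_i\phi(v_i)\bigr)$ gives different fiber colorings over every pair of adjacent $v,w$ in $G^{\square n_2}$. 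The reason is that $\sum_i\phi(v_i)$ and $\sum_i\phi(w_i)$ differ by a nonzero shift, and $f$ has no nonzero period.

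In fact your intermediate claim proves too much. A monochromatic $C\square L$ with an edge in $L$ contains a monochromatic $C_4$. But Section~\ref{sec:2-col} exhibits $G=Cay(\mathbb{Z}_{58},\{1,2,11,16\})$ with $\chi(G)=5>2\cdot 2$, so the hypothesis holds for $r=2$, and yet $G^{\square n}\not\xrightarrow{2}C_4$ for every $n$. Your argument would also give $s\ge\ell+1$, which answers a question the paper leaves open.

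\textbf{The missing idea.} The paper never forces a monochromatic edge in a new direction; it borrows the rung from the odd cycle itself. In Lemma~\ref{L_hanging}, the copy of $F=C_{2i+1}$ sits in the last coordinate of a segment $T$ of $2k-1$ coordinates whose other entries are $a^{k-1}b^{k-1}$, where $ab$ is an edge of $F$. The Cube Layered Lemma is applied with $t=b$. Every vertex of the middle layer $L_k$ on $T$ is then $b$-equivalent to $\mathbf{x}(a)$ or $\mathbf{x}(b)$, which are two adjacent vertices of the monochromatic $F$. So both vertex layers of $L_k$ inherit the color of $F$, and one gets a monochromatic $L_k$ with $F$ glued along $ab$.

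Since $t=b$ must be fixed before knowing which odd cycle becomes monochromatic, the proof of Theorem~\ref{grapes} runs through all copies of $C_3,\dots,C_{2\ell+1}$ in $G$ using nested subcubes. At each step either the target appears, or that copy becomes non-monochromatic in every slice. At the end this contradicts $G\xrightarrow{r}\{C_3,\dots,C_{2\ell+1}\}$ inside a single slice.

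Finally, an edge-layer is $C_4$-free, so the shortest even cycle through an edge of $L_k$ is $C_6$. Gluing $C_{2i+1}$ to $C_{2s-2i+2}$ along an edge yields $C_{2s+1}$ only when $2s-2i+2\ge 6$. That is exactly where $s\ge\ell+2$ comes from, not from routing two detours.
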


We shall prove  Theorems \ref{arrow_layered} and \ref{arrow_cycles} using a more general Theorem \ref{grapes} below. To state it, we need some definitions. 
Let $V_k$ denote the $k$th vertex layer of a hypercube consisting of binary vectors with exactly $k$ ones.
Let $L=L_k$ be the middle, $k$th layer of the hypercube $Q_{2k-1}$ induced by vertex layers $V_{k-1}$ and $V_k$. We represent each edge $e=vu$ of $Q_n$ as a vector coinciding with $u$ and $v$ in the $n-1$ positions where they are the same, and that has $*$ in the position (direction) where $u$ and $v$ differ.  Let $E^*=E^*(L)$ be a subset of the edge set $E(L)$ whose binary-star representation has no ones following the star, for example $1101\!*\!00$ or $101010*$. Observe that $E^*$ is constructed by taking each vertex from the top layer of $L$ and choosing exactly one edge incident to it, corresponding to the largest direction. In particular, $E^*$ forms a star forest whose leaves span the top vertex layer of $L$.
For a graph $F$ and a pair $(a,b)$ of some adjacent vertices of $F$, 
let $L_k(F, (a,b))$ be a graph obtained by
``gluing" copies of $F$ to $L$ by identifying the edges corresponding to $ab$ to respective edges of $E^*$, such that the vertices corresponding to $a$ in all these copies are in the upper vertex layer.
Formally, let $F_e$, $e\in E^*$ be copies of $F$ such that for each $e\in E^*$, the vertex of $F_e$ corresponding to $a$ is $e\cap V_{k}$ and the vertex of $F_e$ corresponding to $b$ is $e\cap V_{k-1}$. Moreover, any other vertex of $F_e$ does not appear in the vertex set of $L_k$ and in the vertex set of any  $F_{e'}$, $e'\in E^*\setminus \{e\}$.
Finally $L_k(F, a,b)=L_k \cup  \bigcup_{e\in E^*}F_e$, see Figure \ref{fig} for an illustration.

%%%%%%%%%%%%%%%%%%%%%%%%%%%%%%%%%%%%%%%%%%
%%%%%%%%%%%%%%%%%%%%%%%%%%%%%%%%%%%%%%%%%%
%%%%%%%%%%%%%%%%%%%%%%%%%%%%%%%%%%%%%%%%%%

% A narrow 5-cycle whose fifth edge is #2--#3.
% The optional argument may be "above" or "below".
\newcommand{\slimfivecycle}[4][above]{%
  \path (#2) -- (#3)
    node[pos=.22,sloped,coordinate,#1=1.2mm] (#4a) {}
    node[pos=.50,sloped,coordinate,#1=1.7mm] (#4b) {}
    node[pos=.78,sloped,coordinate,#1=1.2mm] (#4c) {};
  \fill[yellow!35] (#2)--(#4a)--(#4b)--(#4c)--(#3)--cycle;
  \draw[cycle edge] (#2)--(#4a)--(#4b)--(#4c)--(#3);
  \node[aux vertex] at (#4a) {};
  \node[aux vertex] at (#4b) {};
  \node[aux vertex] at (#4c) {};
}

\begin{figure}
\begin{tikzpicture}[
scale=1.7,
  main vertex/.style={
    circle,
    draw,
    fill=white,
    inner sep=0pt,
    minimum size=4.5pt,
    line width=.5pt
  },
  aux vertex/.style={
    circle,
    draw,
    fill=white,
    inner sep=0pt,
    minimum size=2.8pt,
    line width=.4pt
  },
  incidence/.style={
    draw=gray!45,
    line width=.55pt
  },
  special/.style={
    draw=magenta!65,
    line width=1.25pt
  },
  cycle edge/.style={
    draw=black!65,
    line width=.5pt
  },
  bit label/.style={
    font=\ttfamily\scriptsize,
    inner sep=1pt
  }
]

%------------------------------------------------
% Vertex coordinates
%------------------------------------------------

% Upper row
\coordinate (t1)  at (0.00,1.80);
\coordinate (t2)  at (0.90,1.80);
\coordinate (t3)  at (1.70,1.80);
\coordinate (t4)  at (2.50,1.80);

\coordinate (t5)  at (4.00,1.80);
\coordinate (t6)  at (5.00,1.80);
\coordinate (t7)  at (6.00,1.80);
\coordinate (t8)  at (7.00,1.80);
\coordinate (t9)  at (8.00,1.80);
\coordinate (t10) at (8.80,1.80);

% Lower row
\coordinate (b1)  at (0.00,0);
\coordinate (b2)  at (0.90,0);
\coordinate (b3)  at (1.70,0);
\coordinate (b4)  at (2.50,0);
\coordinate (b5)  at (3.50,0);
\coordinate (b6)  at (4.30,0);

\coordinate (b7)  at (6.50,0);
\coordinate (b8)  at (7.35,0);
\coordinate (b9)  at (8.15,0);
\coordinate (b10) at (9.05,0);

%------------------------------------------------
% Gray incidence edges
%------------------------------------------------

\draw[incidence]
  (t1)--(b1) (t1)--(b2) (t1)--(b3)
  (t2)--(b1) (t2)--(b4) (t2)--(b5)
  (t3)--(b2) (t3)--(b4) (t3)--(b6)
  (t4)--(b3) (t4)--(b5) (t4)--(b6)

  (t5)--(b1) (t5)--(b7) (t5)--(b8)
  (t6)--(b2) (t6)--(b7) (t6)--(b9)
  (t7)--(b3) (t7)--(b8) (t7)--(b9)
  (t8)--(b4) (t8)--(b7) (t8)--(b10)
  (t9)--(b5) (t9)--(b8) (t9)--(b10)
  (t10)--(b6) (t10)--(b9) (t10)--(b10);

%------------------------------------------------
% Slim 5-cycles hanging on the magenta edges
%------------------------------------------------

\slimfivecycle{t1}{b1}{c1}
\slimfivecycle{t2}{b1}{c2}
\slimfivecycle{t3}{b2}{c3}
\slimfivecycle{t4}{b3}{c4}

\slimfivecycle{t5}{b1}{c5}
\slimfivecycle{t6}{b2}{c6}
\slimfivecycle{t7}{b3}{c7}
\slimfivecycle{t8}{b4}{c8}
\slimfivecycle{t9}{b5}{c9}
\slimfivecycle{t10}{b6}{c10}

%------------------------------------------------
% Magenta edges
%------------------------------------------------

\draw[special]
  (t1)--(b1)
  (t2)--(b1)
  (t3)--(b2)
  (t4)--(b3)

  (t5)--(b1)
  (t6)--(b2)
  (t7)--(b3)
  (t8)--(b4)
  (t9)--(b5)
  (t10)--(b6);

%------------------------------------------------
% Main vertices and reversed labels
%------------------------------------------------

% Upper labels
\path
  (t1)  node[main vertex, label={[bit label]above:11100}] {}
  (t2)  node[main vertex, label={[bit label]above:11010}] {}
  (t3)  node[main vertex, label={[bit label]above:10110}] {}
  (t4)  node[main vertex, label={[bit label]above:01110}] {}

  (t5)  node[main vertex, label={[bit label]above:11001}] {}
  (t6)  node[main vertex, label={[bit label]above:10101}] {}
  (t7)  node[main vertex, label={[bit label]above:01101}] {}
  (t8)  node[main vertex, label={[bit label]above:10011}] {}
  (t9)  node[main vertex, label={[bit label]above:01011}] {}
  (t10) node[main vertex, label={[bit label]above:00111}] {};

% Lower labels
\path
  (b1)  node[main vertex, label={[bit label]below:11000}] {}
  (b2)  node[main vertex, label={[bit label]below:10100}] {}
  (b3)  node[main vertex, label={[bit label]below:01100}] {}
  (b4)  node[main vertex, label={[bit label]below:10010}] {}
  (b5)  node[main vertex, label={[bit label]below:01010}] {}
  (b6)  node[main vertex, label={[bit label]below:00110}] {}

  (b7)  node[main vertex, label={[bit label]below:10001}] {}
  (b8)  node[main vertex, label={[bit label]below:01001}] {}
  (b9)  node[main vertex, label={[bit label]below:00101}] {}
  (b10) node[main vertex, label={[bit label]below:00011}] {};

\end{tikzpicture}
\caption{$L_3(C_5,1,0)$ graph obtained by attaching the yellow $5$-cycles to special magenta edges of the middle layer of $Q_5$}
\end{figure}\label{fig}

\begin{Theorem}\label{grapes}
        Let $r$ be a natural number and $G, F_1,\dots,F_s$ be graphs such that $G \xrightarrow{r} \{F_1,\dots,F_s\}$. Let $H$ be a graph and $(a_i,b_i)$ be a pair of adjacent vertices of $F_i$, $i \in \{1, \ldots, s\}$. Suppose there is a sufficiently large $k$ such that a copy of $H$ is a subgraph of $L_k(F_i,a_i,b_i)$ for all $i \in \{1, \ldots, s\}$.
    Then there exists $n \in \N$ such that $G^{\square n} \xrightarrow{r} H$.
\end{Theorem}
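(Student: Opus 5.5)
The plan is to reduce the statement to a purely combinatorial Ramsey statement about discrete cubes $[m]^N$ (the Cube Layered Lemma announced in the abstract), applied to an auxiliary finite coloring. The coloring will record, for each point, what the original $r$-coloring looks like on a copy of $G$ passing through that point.

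\textbf{Step 1 (local types).} Put $m=|V(G)|$ and identify $V(G)$ with $[m]$, and take $n=N+1$ with $N$ to be chosen later. Given an $r$-coloring $c$ of $V(G)^{n}$ and a point $x\in V(G)^{N}$, the fiber $\{x\}\times V(G)$ induces a copy of $G$. Since $G\xrightarrow{r}\{F_1,\dots,F_s\}$, this fiber contains a monochromatic copy of some $F_i$. Fix one such copy for each $x$ and define the \emph{type} $\tau(x)$ to consist of:
\begin{itemize}
\item the index $i$;
\item the embedding of $F_i$ into $G$;
\item the color of that copy.
\end{itemize}
In particular $\tau(x)$ determines the two vertices $g_a,g_b\in V(G)$ that play the roles of $a_i,b_i$. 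There are at most $s\cdot m^{\max|V(F_i)|}\cdot r$ types, a bound independent of $N$. Call two types equal if all of this data agrees.

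\textbf{Step 2 (a monochromatic layered configuration).} I would prove, or take from the Cube Layered Lemma, the following statement. For every $k$ and every finite coloring of $[m]^N$ (here the product of $\tau$ with $c$ restricted to suitable points), if $N$ is large there exist points $p_S$, indexed by all $S\subseteq[2k-1]$ with $|S|\in\{k-1,k\}$, with four properties:
\begin{itemize}
\item $p_S$ and $p_T$ are adjacent in the Cartesian power whenever $S\subset T$, so that the map $S\mapsto p_S$ embeds $L_k$;
\item all the $p_S$ receive the same color $c_0$;
\item for each top set $T$ with $j=\max T$, the edge $p_{T\setminus\{j\}}p_T$ lies in a single $G$-fiber (that is, it changes one coordinate) whose recorded type is a fixed $\tau_0$;
\item along that edge, $p_T$ and $p_{T\setminus\{j\}}$ take the values $g_a$ and $g_b$ of $\tau_0$ respectively.
\end{itemize}
The natural tool is Hales--Jewett (or Graham--Rothschild) applied with $2k-1$ ``variables'', each realised by a single coordinate rather than by a block. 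One would pass to a combinatorial subspace on which the type and the color depend only on the layer and on the pattern ``no ones after the star''. The specific direction-ordering in $E^*$ (the star sits at the largest coordinate) is exactly what makes such a canonical pattern definable, so that the structure can be found among the shapes produced by Hales--Jewett.

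\textbf{Step 3 (assembling $H$).} Given the configuration from Step 2, the color of the fiber copy of $F_{i_0}$ in $\tau_0$ must equal $c_0$, since that copy contains $p_T$, which has color $c_0$. Hence $L_k$ together with the glued copies of $F_{i_0}$ forms a monochromatic copy of $L_k(F_{i_0},a_{i_0},b_{i_0})$ in $G^{\square n}$. For this to hold, the copies hanging at different edges of $E^*$ must be disjoint from each other and from $L_k$. This holds because distinct edges of $E^*$ use distinct top points and differ in coordinates outside the fiber direction, so distinct fibers are disjoint; one has to check this carefully. By hypothesis $H\subseteq L_k(F_{i_0},a_{i_0},b_{i_0})$, which gives $G^{\square n}\xrightarrow{r}H$.

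\textbf{Main obstacle.} I expect the difficulty to be Step 2. Cartesian adjacency forces each edge to change exactly one coordinate, so the standard block-variable trick of Hales--Jewett does not preserve edges. The variables therefore have to be single coordinates, and one must show that one can still force monochromaticity simultaneously on two vertex layers and on the types of all $E^*$-fibers. My first attempt would be an induction on $k$: add the new largest direction $2k-1$ and use pigeonhole over the finitely many types, together with a product-Ramsey argument over the positions of the $k-1$ earlier ones. That the $E^*$-edges are precisely those with no ones after the star suggests this ordering is what makes such an induction go through.
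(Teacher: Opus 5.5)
\paragraph{The gap: your Step 2 cannot hold as formulated.}
Step 2 carries the whole theorem, and in its current form no such configuration exists.

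In Step 1 you record a type only for the fibers $\{x\}\times V(G)$, that is, only for slices in the single direction $N+1$. In Step 3, however, the recorded copy of $F_{i_0}$ must contain the $E^*$-edge it is glued to. So every edge $p_{T\setminus\{\max T\}}p_T$ would have to change coordinate $N+1$, and this is impossible for $k\ge 2$:
\begin{itemize}
\item Take a top set $T$ with $\max T<2k-1$, for instance $T=[k]$, and a bottom set $S\subset T$ with $S\neq T\setminus\{\max T\}$.
\item Then $S$ is the lower end of the $E^*$-edge from $S\cup\{2k-1\}$. Hence both $p_S$ and $p_{T\setminus\{\max T\}}$ have last coordinate $g_b$, while $p_T$ has last coordinate $g_a\neq g_b$.
\item Both are adjacent to $p_T$, so both differ from $p_T$ only in coordinate $N+1$. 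This forces $p_S=p_{T\setminus\{\max T\}}$, contradicting injectivity.
\end{itemize}
The $E^*$-edges must therefore run in $k$ different directions. If you record a type for every slice through a point, the number of colors grows with $n$, so there is no fixed finite coloring to which a Ramsey theorem applies.

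Nor is Hales--Jewett available with single-coordinate variables. The coloring $\x\mapsto\|\x\|_1 \bmod 2$ of $[2]^n$ has no monochromatic slice at all. Your ``Main obstacle'' paragraph and the suggested induction on $k$ do not supply a substitute for this.

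\paragraph{The missing idea: invariance under $b$-equivalence, used by contradiction.}
Instead of recording types, the paper uses invariance under $b$-equivalence. The Cube Layered Lemma, proved by iterated hypergraph Ramsey, gives a subcube on which any two points that agree after deleting all coordinates equal to $t$ share a color. Take $t=b$ and a segment of $2k-1$ positions. Every vertex $(x_1,\dots,x_i,c,b,\dots,b)$ of the glued structure is then $b$-equivalent to $(a,\dots,a,b,\dots,b,c)$. So a single monochromatic copy of $F$ in the slice of the last position, with the first $2k-2$ positions fixed to $a^{k-1}b^{k-1}$, already gives a monochromatic $L_k(F,a,b)$ (Lemma~\ref{L_hanging}). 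This is the real reason $E^*$ consists of the edges with no ones after the star.

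Because $t=b$ must be fixed before you know which copy is monochromatic, the paper argues by contradiction rather than directly:
\begin{itemize}
\item It enumerates all copies $F'_1,\dots,F'_N$ of the $F_i$ in $G$.
\item For each $j$, it applies the lemma with $t=w_j$ inside the previous subcube.
\item It fixes every segment to $v_j^{k-1}w_j^{k-1}$ and keeps only the last position of each segment variable. This gives a nested subcube $M_j$ in whose slices $F'_j$ is never monochromatic, since otherwise $H$ would be monochromatic.
\end{itemize}
A slice of $M_N$ is then a copy of $G$ with no monochromatic $F'_j$, contradicting $G\xrightarrow{r}\{F_1,\dots,F_s\}$.
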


Informally speaking, the above theorem claims that a graph obtained from any layered graph $L$ by attaching some graphs $F$'s (with nice Ramsey properties in $G$) to special edges of $L$ is Ramsey in the product  $G^{\square n}$. A result in \cite{ALS} claims that a graph obtained from any zero-Tur\'an density (in the hypercube) graph $Z$ by attaching some graphs $F$'s (with nice Ramsey properties in $G$) to all edges of $Z$ is Ramsey in the product  $G^{\square n}$. We compare these results and give more examples in Section \ref{conclusions}.

One of the motivating applications of Cartesian powers of graphs is in the Euclidean graph theory and unit-distance graphs.
For references on Euclidean Ramsey theory, see \cite{ALS}.
A graph $G$ is a {\it unit-distance graph} in some given metric space $M$ if there is an embedding of the vertices of $G$ in $M$ such that two vertices of $G$ are adjacent if and only if their images are at distance one in $M$.
Horvat and Pisanski \cite{HP} showed that the Cartesian product of two unit distance graphs is a unit distance graph in the same metric space. Thus Cartesian powers of unit distance graphs are large unit distance graphs having nice Ramsey properties, which in turn imply Ramsey-type properties for colorings of the plane.  One of the main functions studied in the Euclidean graph theory is $\chi=\chi_F(\mathbb{R}^n)$, where $F$ is a unit distance graph and $\chi$ is the smallest number of colors used on the points from $\mathbb{R}^n$ such that no copy of $F$ is monochromatic. Here, a copy of $F$ in $\mathbb{R}^n$ is a set of $V(F)$ points corresponding to the vertices of $F$ such that any two points from this set corresponding to two adjacent vertices of $F$ are at distance $1$. Specifically, $\chi_{K_2}(\mathbb{R}^d)$ is the chromatic number of the space, also simply denoted by $\chi(\mathbb{R}^d)$.
Theorem \ref{arrow_layered} implies the following.

\begin{Corollary} \label{cor:Euclid}
    For any layered graph $H$ and any natural number $n\geq 2$, 
$\chi_H(\mathbb{R}^n)= \chi (\mathbb{R}^n)$.
\end{Corollary}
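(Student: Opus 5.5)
We derive the corollary from Theorem~\ref{arrow_layered} applied to a suitable finite unit-distance graph $G$ in $\mathbb{R}^n$, using the Horvat--Pisanski result that Cartesian products of unit-distance graphs are again unit-distance graphs in the same space.

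The inequality $\chi_H(\mathbb{R}^n)\le \chi(\mathbb{R}^n)$ is immediate. We may assume $H$ has at least one edge; otherwise $H$ is a layered graph with no edges, and the statement is degenerate. Then a coloring of $\mathbb{R}^n$ with no two points at distance $1$ of the same color has no monochromatic copy of $H$, since any copy of $H$ contains a pair of points at distance $1$.

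For the reverse inequality, set $r=\chi(\mathbb{R}^n)-1$; this is finite for every $n\ge 2$ by the standard cube-tiling bounds. We must show that every $r$-coloring of $\mathbb{R}^n$ contains a monochromatic copy of $H$.
\begin{enumerate}
\item By the de Bruijn--Erdős compactness theorem there is a finite unit-distance graph $G_0$ in $\mathbb{R}^n$ with $\chi(G_0)=r+1$. Concretely, since $r$ colors do not suffice for $\mathbb{R}^n$, some finite point set $P$ has a unit-distance graph that is not $r$-colorable.
\item If $\chi(G_0)>r+1$, delete vertices one at a time. Each deletion lowers the chromatic number by at most one, so we reach an induced subgraph $G$ with $\chi(G)=r+1$ exactly. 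It is still a unit-distance graph in $\mathbb{R}^n$ via the same embedding, since we only removed points.
\item Theorem~\ref{arrow_layered} gives $N$ with $G^{\square N}\xrightarrow{r} H$.
\item By Horvat--Pisanski, iterated $N-1$ times, $G^{\square N}$ is a unit-distance graph in $\mathbb{R}^n$. Fix such an embedding $\varphi$ of $G^{\square N}$ into $\mathbb{R}^n$.
\end{enumerate}

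Given any $r$-coloring $c$ of $\mathbb{R}^n$, pull it back to the coloring $c\circ\varphi$ of $V(G^{\square N})$. This yields a monochromatic copy of $H$ inside $G^{\square N}$. Its image under $\varphi$ is a set of points whose pairs corresponding to edges of $H$ are at distance $1$. That is exactly a monochromatic copy of $H$ in $\mathbb{R}^n$ in the sense defined above. Hence $\chi_H(\mathbb{R}^n)\ge r+1=\chi(\mathbb{R}^n)$.

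The main subtlety is the compactness step: we need the unit-distance graph $G$ to be finite with chromatic number exactly $r+1$. The finiteness comes from de Bruijn--Erdős, and the exactness from vertex deletion. The Horvat--Pisanski product construction then supplies the embedding. We should also check that their result gives an embedding in the same dimension $n$, which requires $n\ge 2$ so that the generic-rotation argument can avoid coincidences of points and unwanted unit distances. Finally, injectivity of $\varphi$ matters: a copy of $H$ must consist of $|V(H)|$ distinct points, and this is guaranteed because $\varphi$ is an embedding.
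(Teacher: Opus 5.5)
Your proof is correct and follows the paper's own argument. You take a finite unit-distance graph $G$ in $\mathbb{R}^n$ with $\chi(G)=r+1=\chi(\mathbb{R}^n)$, apply Theorem~\ref{arrow_layered} to get $G^{\square N}\xrightarrow{r} H$, and realize $G^{\square N}$ in $\mathbb{R}^n$ via Horvat--Pisanski. Beyond the paper, you spell out the de Bruijn--Erd\H{o}s step, the trivial upper bound and the edgeless degenerate case, and you separate the dimension $n$ from the power $N$, which the paper writes with the same letter. Your vertex-deletion step is harmless but vacuous, since every finite subgraph of the unit-distance graph of $\mathbb{R}^n$ already has chromatic number at most $\chi(\mathbb{R}^n)=r+1$.
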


To see why the corollary holds, observe that if $\chi_{K_2} (\mathbb{R}^n)=r+1$ then $\chi(G)=r+1$ for some unit-distance graph $G$. Then by Theorem \ref{arrow_layered}, we have that $G^{\square n} \xrightarrow{r} H$. Since $G^{\square n}$ is a unit-distance graph, we have that $\chi_H(\mathbb{R}^n)> r$.  As a special case,  \Cref{cor:Euclid} implies that $\chi_{C_6}(\mathbb{R}^2)>3$ and $\chi_{C_{10}}(\mathbb{R}^2)>3$. This answers Question 2 from~\cite{ALS}, which was later reiterated by Moore and Sagdeev~\cite{MS}.

 The paper is structured as follows. In Section \ref{sec:def} we introduce the necessary preliminaries. Section \ref{lem:layer} gives a key Ramsey-type statement on the discrete cube $[m]^n$ that we call Cube Layered Lemma. 
 In Section \ref{sec:grapes} we prove Theorem \ref{grapes}. 
 In Section \ref{sec:proofs} we derive Theorems \ref{arrow_layered} and \ref{arrow_cycles} from Theorem \ref{grapes}.
 In Section \ref{sec:other} we give some key coloring construction and prove Proposition \ref{prop:necessary} as well as an additional result in the special regime when $\chi(G)=2r$. Section \ref{sec:2-col} addresses the case $G^{\square n} \xrightarrow{2} H$, in particular when $G$ is assumed to have high chromatic number.   We conclude the paper in Section \ref{conclusions} with final remarks and open problems.

%%%%%%%%%%%%%%%%%%%%%%%%%%%%%%%%%
%%%%%%%%%%%%%%%%%%%%%%%%%%%%%%%%%
\section{Preliminaries}
\label{sec:def}
%%%%%%%%%%%%%%%%%%%%%%%%%%%%%%%%%
%%%%%%%%%%%%%%%%%%%%%%%%%%%%%%%%%
For $n \in \N$, let $[n]\coloneq\{1,\dots,n\}$. 
Given $m,n\in\N$, we call the set $[m]^n$ an {\it $m$-ary $n$-cube}. For $\x=(x_1,\dots,x_n)\in [m]^n$ we call each $x_i$ the {\it coordinate} of $\x$ in {\it position} $i$ or the {\it $i$-th coordinate}.  We use  $\|\x\|_1$ to denote $x_1+\cdots+x_n$.
We also identify the elements of the cube $[m]^n$ with words of length $n$ over the alphabet $[m]$.
A {\it $d$-subcube} of the cube $[m]^n$ is a set $\{ (x_1, \dots, x_n) \in [m]^n:   x_i=y_i, i\not\in I\}$, 
where $I\subseteq [n]$ is a set of some $d$ indices and $y_i$'s are some fixed elements of $[m]$. We shall call $I$ the {\it set of variable positions} and $[n]\sm I$ the {\it set of fixed positions} of the subcube.
In the special case $d=1$, we call  $1$-subcubes of $[m]^n$ its \textit{slices}.
Recall that a graph $G$ is {\it layered} if it is a subgraph of an edge-layer of some hypercube. 
We call a graph \textit{cubical} if it is a subgraph of some hypercube.
A \textit{$G$-slice} of $G^{\square n}$ is a subgraph induced by the Cartesian product of $V(G)$ and $n-1$ one-element subsets of $V(G)$ in any order. There are exactly $n|V(G)|^{n-1}$ $G$-slices in $G^{\square n}$, each of which is isomorphic to $G$.
Note that if $G$ is a graph on the vertex set $[m]$ then a subgraph of $G^{\square n}$ induced by any  $d$-subcube of $[m]^n$ is isomorphic to $G^{\square d}$. In particular, a subgraph of $G^{\square n}$ induced by any slice of $[m]^n$ is a $G$-slice of $G^{\square n}$.

We shall need another lemma that is of independent interest about graphs embedded in Cartesian powers.
Let $H$ be a subgraph of $G^{\square n}$.  
We define the  {\it $1^{\rm st}$ projection } $H(1)$ of $H$ in $G^{\square n}$ as the  graph on vertex set $\{u\in V(G):   (u, v_2, \ldots, v_n)\in V(H) \mbox{ for some } v_2, \ldots, v_n\in V(G)\}$ and all edges $uw$,  such that $(u, v_2, \ldots, v_n)(w, v_2, \ldots, v_n)\in E(H)$ for some $v_2, \ldots,   v_n\in V(G)$. The $i^{\rm th}$ {\it projection} of $H$ in $G^{\square n}$ is defined similarly by considering the $i^{\rm th}$ vector coordinates instead of the first.

\begin{Lemma}\label{lem:embedding-cubical}
    Let $H$ be a subgraph of $G^{\square n}$ such that each projection of $H$ is cubical, then $H$ is cubical.
\end{Lemma}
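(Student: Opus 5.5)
Each projection $H(i)$ is cubical, so for every $i\in[n]$ we fix an injective map $\phi_i\colon V(H(i))\to\{0,1\}^{d_i}$ that sends every edge of $H(i)$ to an edge of $Q_{d_i}$, i.e.\ to a pair of vectors at Hamming distance exactly one. Set $d=d_1+\cdots+d_n$ and define $\Phi\colon V(H)\to\{0,1\}^d$ by concatenation:
\[
\Phi(v_1,\dots,v_n)=\bigl(\phi_1(v_1),\phi_2(v_2),\dots,\phi_n(v_n)\bigr).
\]
This is well defined, because if $(v_1,\dots,v_n)\in V(H)$ then by definition $v_i\in V(H(i))$ for every $i$. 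We will show that $\Phi$ is an injective homomorphism from $H$ into $Q_d=K_2^{\square d}$. Its image is then a subgraph of $Q_d$ containing a copy of $H$, so $H$ is cubical.

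\emph{Injectivity.} Suppose $\Phi(\mathbf v)=\Phi(\mathbf w)$. Then each block agrees, so $\phi_i(v_i)=\phi_i(w_i)$ for all $i$. Since every $\phi_i$ is injective, $v_i=w_i$ for all $i$, and hence $\mathbf v=\mathbf w$.

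\emph{Edges go to edges.} Let $\mathbf v\mathbf w\in E(H)$. Since $H\subseteq G^{\square n}$, the tuples $\mathbf v$ and $\mathbf w$ agree in all coordinates except one, say $j$, and $v_jw_j\in E(G)$. By the definition of the $j$th projection, $v_jw_j$ is then an edge of $H(j)$, not merely of $G$; the witnessing tuple of other coordinates is the common part of $\mathbf v$ and $\mathbf w$. It follows that:
\begin{itemize}
\item $\phi_j(v_j)$ and $\phi_j(w_j)$ differ in exactly one bit;
\item for every $i\neq j$, the blocks $\phi_i(v_i)=\phi_i(w_i)$ coincide.
\end{itemize}
Hence $\Phi(\mathbf v)$ and $\Phi(\mathbf w)$ are at Hamming distance exactly one, i.e.\ adjacent in $Q_d$.

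Subgraphs of hypercubes need not be induced, so an injective homomorphism suffices and no non-edge condition is required. The only step needing care is the second one: one must observe that each edge of $H$ projects to an edge of the corresponding projection $H(j)$, which is where cubicality is assumed, rather than just to an edge of $G$. The definition of projection supplies exactly this, so I expect no real obstacle. Everything else is routine bookkeeping with the concatenated coordinates.
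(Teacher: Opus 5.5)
Your proposal is correct and follows the paper's argument. The paper notes that $H\subseteq H(1)\square\cdots\square H(n)$ and embeds each factor in a hypercube to get $H\subseteq Q_{mn}$; your concatenated map $\Phi$ is the explicit version of this, and your edge check (each edge of $H$ projects to an edge of some $H(j)$) is exactly that containment.
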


\begin{proof}
Observe that $H$ is a subgraph of $H(1)\square H(2) \square \cdots \square H(n)$. Each $H(i)$ is a subgraph of $Q_m$, for some sufficiently large $m$.
Thus $H$ is a subgraph of $Q_m\square \cdots \square Q_m = Q_{mn}$. So, $H$ is cubical.
\end{proof}

%%%%%%%%%%%%%%%%%%%%%%%%%%%%%%%%%%%%%%%%%%%%%%%%%%%%%%%%%%%
%%%%%%%%%%%%%%%%%%%%%%%%%%%%%%%%%%%%%%%%%%%%%%%%%%%%%%%%%%%
\section{Cube Layered Lemma}\label{lem:layer}
%%%%%%%%%%%%%%%%%%%%%%%%%%%%%%%%%%%%%%%%%%%%%%%%%%%%%%%%%%%
%%%%%%%%%%%%%%%%%%%%%%%%%%%%%%%%%%%%%%%%%%%%%%%%%%%%%%%%%%%
Next we shall establish some further definitions and notations for a 
Cube Layered Lemma of independent interest. We shall use the lemma to prove Theorem \ref{grapes}. 

Let $m,n$ be positive integers and $t\in[m]$. We say that two elements $\x, \y\in [m]^n$ are \emph{$t$-equivalent}, and write $\x\sim_t \y$, if the tuples obtained from $\x$ and $\y$ by ``deleting" all coordinates equal to $t$ coincide. For example $(2,1,3,1,1,3) \sim_1 (1, 2, 3, 1, 1, 3)$ and $(2,1,3,1,1,3) \not\sim_1 (3, 1, 3, 1, 1, 2)$.

For $\x=(x_1,\dots,x_n)\in [m]^n$, we let $I_t(\x)$ be the set of indices $i \in [n]$ such that $x_i\neq t$. For $I \subseteq [n]$, let $\x|_I$ be a tuple obtained from $\x$ by restricting it to index set $I$, that is, if $\x=(x_1, \ldots, x_n)$, then $\x|_I = (x_{i_1}, \ldots, x_{i_{|I|}})$, where  $i_1< \cdots <i_{|I|}$, $I=\{i_1, \ldots, i_{|I|}\}$.  For example, $I \coloneq I_1((2, 1, 3, 1, 1, 3))=\{1,3,6\}$ and $(2, 1, 3, 1, 1, 3)|_I = (2, 3, 3)$.
Using this notation, we see that  two elements $\x=(x_1, \dots, x_n)$ and $\y=(y_1, \dots, y_n)$ from $[m]^n$ are \emph{$t$-equivalent} if $\x|_{I_t(\x)} = \y|_{I_t(\y)}$.  Note that if two distinct elements $\x$ and $\y$ are $t$-equivalent then $|I_t(\x)| =|I_t(\y)| \neq 0$.

One of our main tools is our  Cube Layered Lemma that is of independent interest. We note that this lemma was proved in \cite{ALS} in the case when $m=3$.

\begin{Lemma}[Cube Layered Lemma]
\label{lem:equivalent}
For any $ d', m, r \in \N$ and $t\in[m]$, there exists $ n' = n'(d',m,r)\in\N$ such that the following holds. For every $r$-coloring of $[m]^{n'}$, there is a $d'$-element subset $S' \subseteq[n']$ such that every two $t$-equivalent elements $\x, \y$ of the  $d'$-subcube $\{\z \in [m]^{n'}: I_{t}(\z)\subseteq S'\}$ receive the same color.
\end{Lemma}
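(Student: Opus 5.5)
The plan is to reduce the statement to the classical hypergraph Ramsey theorem by recording where the non-$t$ coordinates sit. Fix $t\in[m]$ and write $A=[m]\sm\{t\}$. Every $\z\in[m]^{n'}$ is determined by two pieces of data: the set $I_t(\z)\subseteq[n']$, and the word $\z|_{I_t(\z)}$ over the alphabet $A$. Conversely, a $j$-element set $J=\{i_1<\dots<i_j\}\subseteq[n']$ and a word $u=(u_1,\dots,u_j)\in A^j$ determine a unique element $\z(J,u)$. This element has $u_\ell$ in position $i_\ell$ and $t$ in every other position.

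By definition, $\x\sim_t\y$ holds exactly when $\x=\z(J,u)$ and $\y=\z(J',u)$ for the same word $u$ and two sets $J,J'$ of equal size. Moreover, $\z(J,u)$ lies in the subcube $\{\z: I_t(\z)\subseteq S'\}$ exactly when $J\subseteq S'$. So the conclusion of the lemma is equivalent to the following: there is a $d'$-set $S'$ such that for every $j\le d'$ and every $u\in A^j$, the color of $\z(J,u)$ does not depend on the choice of the $j$-subset $J\subseteq S'$.

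Given an $r$-coloring $c$ of $[m]^{n'}$, I will define for each $j\in[d']$ a coloring $\phi_j$ of the $j$-subsets of $[n']$ by the vector
\[
\phi_j(J)=\bigl(c(\z(J,u))\bigr)_{u\in A^j}.
\]
This vector takes at most $r^{(m-1)^j}$ values. The case $j=0$ is trivial, since the only element with $I_t(\z)=\emptyset$ is the constant word $(t,\dots,t)$.

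Next I apply Ramsey's theorem for $j$-uniform hypergraphs iteratively. Set $N_{d'}=d'$. Going downwards from $j=d'$ to $j=1$, let $N_{j-1}=R_j(N_j;\,r^{(m-1)^j})$, the $j$-uniform Ramsey number guaranteeing a monochromatic set of size $N_j$ under a coloring with $r^{(m-1)^j}$ colors. Put $n'=N_0$. Starting from $S_0=[n']$, at step $j=1,2,\dots,d'$ I choose $S_j\subseteq S_{j-1}$ with $|S_j|=N_j$ on which $\phi_j$ is constant on all $j$-subsets. Homogeneity for smaller $j$ is preserved when passing to subsets, so $S'=S_{d'}$ has size $d'$ and every $\phi_j$ is constant on $\binom{S'}{j}$. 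This is exactly the reformulated conclusion, since two $t$-equivalent elements of the subcube share a word $u$ of some length $j$ with supports $J,J'\subseteq S'$, and $\phi_j(J)=\phi_j(J')$ gives equal $u$-coordinates.

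The only delicate step is the reformulation: checking that $t$-equivalence within the subcube is precisely "same word $u$, arbitrary support of the same size inside $S'$." The ordering of positions matters here, because $\z|_I$ lists coordinates in increasing order. That is why the encoding $J\mapsto\z(J,u)$ must place the letters of $u$ in increasing order of position. Once this is in place, the rest is a routine application of the finite Ramsey theorem. The resulting $n'$ depends only on $d',m,r$ (not on $t$), as required.
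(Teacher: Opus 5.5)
Your proposal is correct and is essentially the paper's proof. The paper also colors each $j$-subset $I$ by the record $\{(\aaa,c(\x^I[\aaa])):\aaa\in([m]\sm\{t\})^j\}$, which uses at most $r^{(m-1)^j}$ colors, and applies the $j$-uniform hypergraph Ramsey theorem for $j=1,\dots,d'$ on nested sets $S_1\supseteq S_2\supseteq\cdots$; your recursive choice of the numbers $N_j$ simply makes the paper's ``sufficiently large $n'$'' explicit.
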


\noindent
{\bf Remark.} When $m=2$, the lemma states that  in an $r$-colored hypercube, there is a subhypercube  such that each of its vertex layers is monochromatic. This gives the lemma its name.

\begin{proof}[Proof of~\Cref{lem:equivalent}]
When $m=1$, the cube $[m]^{n'}$ consists of one element and the statement holds trivially. Let now $m\geq2$ and $d',r\in\N$ be fixed integers. We shall
prove the lemma for $t=1$ as the argument for any other $t$ is identical. Let $n'=n'(d',m,r)\in\N$ be sufficiently large.

For a tuple $\aaa \in ([m]\sm\{1\})^{j}$, $1\leq j<n'$, and $I\subseteq [n']$ of size $j$, we let $\x^I[\aaa]=\x$ denote the unique element of $[m]^{n'}$ such that $I_1(\x)=I$ and $\x|_I = \aaa$. Namely, $\x^I[\aaa]$ is obtained by filling the designated positions $I$ with the tuple $\aaa$ and filling the remaining positions with $1$. For example, if  $n'=5$, then $\x^{\{2,4\}}[(3,2)]=(1, 3, 1, 2, 1)$.

Consider an arbitrary $r$-coloring $c:[m]^{n'} \to [r]$.  To find the desired subset $S' \subseteq [n']$, we shall iteratively apply hypergraph Ramsey theorem to certain auxiliary hypergraphs $\cH_1, \dots, \cH_{d'}$ with a new auxiliary edge-coloring defined on each of them. Recall that the Ramsey theorem claims that for any $r, j$, and  $d$ there is a sufficiently large $n$ such that any $r$-coloring of a complete $j$-uniform hypergraph on $n$ vertices contains a complete monochromatic sub-hypergraph on $d$ vertices. 

{\bf Step 1.} Let $\cH_1$ be the $1$-uniform complete hypergraph on the vertex set $[n']$. We assign to each edge $\{i\}$ of $\cH_1$ a color given by the set $\{(a,c(\x^{\{i\}}[a])):a\in[m]\sm\{1\}\}$. For example if $n'=4, m=3, r=6$,  $c((1, 2, 1, 1))=5$ and  $c((1, 3, 1, 1))=6$, then the color of $\{2\}$ is $\{(2, 5), (3,6)\}$. This gives an auxiliary coloring of the edges of $\cH_1$ using at most $r^{m-1}$ colors. By the pigeonhole principle, there is a large set of vertices of $\cH_1$, say $S_1\subseteq [n']$, inducing a monochromatic $1$-uniform clique under the auxiliary coloring. Observe that every two $1$-equivalent elements $\x, \y$ of the  $|S_1|$-subcube $\{\z \in [m]^{n'}: I_{1}(\z)\subseteq S_1\}$ such that $|I_1(\x)|=|I_1(\y)|=1$ satisfy $c(\x)=c(\y)$. 

Let $j\in\N$ with $2\leq j\leq d'$. Assume steps $1, \dots, j-1$ have been performed and a subset $S_{j-1} \subseteq [n']$ has been found with the property that every two $1$-equivalent elements $\x, \y$ of the  $|S_{j-1}|$-subcube $\{\z \in [m]^{n'}: I_{1}(\z)\subseteq S_{j-1}\}$ such that $|I_1(\x)|=|I_1(\y)|\le j-1$ satisfy $c(\x)=c(\y)$. 
 
{\bf Step $\boldsymbol{j}$.} Let $\cH_j$ be the $j$-uniform complete hypergraph on the vertex set $S_{j-1}$. We assign to each edge $I\in\binom{S_{j-1}}{j}$ of $\cH_j$ a color given by the set $\{(\aaa,c(\x^{I}[\aaa])):\aaa\in([m]\sm\{1\})^j\}$. This auxiliary edge-coloring of $\cH_j$ uses at most $r^{(m-1)^j}$ colors. Since $S_{j-1}$ is large enough (as $n'=n'(d',m,r)$ was initially chosen to be sufficiently large), by the hypergraph Ramsey theorem, there is a large set of vertices of $\cH_j$, say $S_j \subseteq S_{j-1},$ inducing a monochromatic $j$-uniform clique under the auxiliary edge-coloring. 
Observe that every two $1$-equivalent elements $\x, \y$ of the  $|S_j|$-subcube $\{\z \in [m]^{n'}: I_{1}(\z)\subseteq S_j\}$ such that $|I_1(\x)|=|I_1(\y)|=j$ satisfy $c(\x)=c(\y)$. In addition, since $S_j \subseteq S_{j-1}$, we also have $c(\x)=c(\y)$ for every two $1$-equivalent elements $\x,\y$ of this subcube such that $|I_1(\x)|=|I_1(\y)|\le j-1$.

After $d'$ steps, we obtain a set $S_{d'}\subseteq [n']$ with the property that $|S_{d'}|\geq d'$ and every two $1$-equivalent elements $\x, \y$ of the  $|S_{d'}|$-subcube $\{\z \in [m]^{n'}: I_{1}(\z)\subseteq S_{d'}\}$ such that $|I_1(\x)|=|I_1(\y)|\le d'$ satisfy $c(\x)=c(\y)$. The desired set $S'$ is obtained by taking any $d'$-element subset of $S_{d'}$.
\end{proof}

%%%%%%%%%%%%%%%%%%%%%%%%%%%%%%%%%

\section{Proof of \Cref{grapes}}\label{sec:grapes}

First, we shall state and prove another lemma needed in the proof of Theorem \ref{grapes}. 

Recall that $L_k(F, a,b)$ is a graph obtained by
``gluing" or ``attaching" copies of a graph $F$ to the middle layer $L_k$ of $Q_{2k-1}$  by identifying the edges corresponding to $ab$ with the respective special edges of $E^*$ having no $1$'s following the star, as defined before Theorem \ref{grapes}.

Let a graph $G$ on the vertex set $[m]$ contain $F$ as a subgraph. 
For a set $T \subseteq [n]$ of size $2k-1$, $a, b \in [m]$, $a\neq b$, and $\mathbf{y}\in [m]^{n-2k+1}$,  let $V = V(T, a, b, \mathbf{y})$ be the set of vertices in $G^{\square n}$ whose coordinates match $\mathbf{y}$ on $[n] \setminus T$, and whose coordinates in $T$ consist of $k-1$ copies of $a$,  $k-1$ copies of $b$, and one value from $V(F)$, possibly $a$ or $b$.
We denote by $G^{\square n}(T, a, b, \mathbf{y})$ the subgraph of $G^{\square n}$ induced by $V$.
For example, when $m=3$, $n=5$, $k=2$, $T=\{1, 2, 4\}$,  and $V(F)=\{a,b\}$,  
$V(T, 1,2, (3,3)) = \{( 1, 1, 3, 2, 3), (1, 2, 3, 1, 3), (2, 1, 3, 1, 3),(2, 1, 3, 2, 3), (2, 2, 3, 1, 3), (1,2,3,2,3)\}$.

\begin{Lemma} \label{L_hanging}
   The graph  $G'=G^{\square n}(T, a, b, {\bf y})$ contains a copy of $L_k(F, a, b)$ as a subgraph. Assume, in addition, that $G'$ is vertex-colored, such that, restricted to position in $T$, any  two vertices from 
   $V(T, a,b, {\bf y})$   that are $b$-equivalent are of the same color, and all vertices from  $V(T, a,b, {\bf y})$ that, restricted to $T$ belong to $\{a\}^{k-1} \times \{b\}^{k-1} \times V(F)$, are red. Then $G'$ contains a red copy of $L_k(F, a, b)$.
\end{Lemma}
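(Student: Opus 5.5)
The plan is to write down an explicit embedding of $L_k(F,a,b)$ into $G'$ and then check the colour of each vertex of this copy by computing its $b$-deleted word.

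\textbf{Setup.} Write $T=\{t_1<\dots<t_{2k-1}\}$. Every vertex we use agrees with $\mathbf y$ outside $T$, so we describe vertices only by their restriction to $T$, a word of length $2k-1$. Recall that $a,b\in V(F)\subseteq V(G)=[m]$ and $ab\in E(F)\subseteq E(G)$.

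\textbf{Embedding of $L_k$.} For a binary vector $v$ of length $2k-1$, let $\phi(v)$ be the word obtained by replacing each $1$ by $a$ and each $0$ by $b$.
\begin{itemize}
\item If $v\in V_{k-1}$, then $\phi(v)$ has $k-1$ letters $a$ and $k$ letters $b$. We read one of the $b$'s as the "$V(F)$-value", so $\phi(v)\in V(T,a,b,\mathbf y)$.
\item If $v\in V_k$, then $\phi(v)$ has $k$ letters $a$ and $k-1$ letters $b$. We read one $a$ as the $V(F)$-value, so again $\phi(v)\in V(T,a,b,\mathbf y)$.
\item An edge $uw$ of $L_k$ changes a single coordinate from $1$ to $0$. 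Its image changes a single coordinate from $a$ to $b$, which is an edge of $G^{\square n}$ because $ab\in E(G)$.
\end{itemize}
Hence $\phi$ embeds $L_k$ into $G'$.

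\textbf{Attaching the copies of $F$.} Take a special edge $e\in E^*$ with star in position $p$. Define $\psi_e$ on $V(F)$ by sending $f\in V(F)$ to the word equal to $\phi(e)$ off position $p$ and equal to $f$ at position $p$. Here $\phi(e)$ off $p$ means the common $a/b$ pattern of the two endpoints of $e$; it has $k-1$ letters $a$ and $k-1$ letters $b$.
\begin{itemize}
\item $\psi_e(a)$ and $\psi_e(b)$ are the images of the top and bottom endpoints of $e$, as the definition of $L_k(F,a,b)$ requires.
\item $\psi_e(f)\in V(T,a,b,\mathbf y)$ for every $f\in V(F)$.
\item Edges of $F$ go to edges of $G^{\square n}$, since only coordinate $p$ varies.
\item For $f\notin\{a,b\}$, the vertex $\psi_e(f)$ uses a letter outside $\{a,b\}$, so it is not in $\phi(L_k)$.
\item For $f\notin\{a,b\}$, the vertex $\psi_e(f)$ determines $e$: $p$ is the position of the unique letter outside $\{a,b\}$, and the remaining $a/b$ pattern is $\phi(e)$ off $p$. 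So the copies $F_e$ are pairwise disjoint outside $L_k$.
\end{itemize}
This gives the first claim.

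\textbf{Colouring claim.} Each vertex of the copy agrees with $\mathbf y$ off $T$ and has length $2k-1$ on $T$. By hypothesis it suffices to show that its $b$-deleted word equals that of some red word $a^{k-1}b^{k-1}f$ with $f\in V(F)$. The red words delete to $a^{k-1}$ when $f=b$, and to $a^{k-1}f$ otherwise.
\begin{itemize}
\item Bottom vertices $\phi(v)$, $v\in V_{k-1}$, delete to $a^{k-1}$, matching $f=b$.
\item Top vertices delete to $a^k$, matching $f=a$.
\item For $f\notin\{a,b\}$, the vertex $\psi_e(f)$ deletes to the $k-1$ letters $a$ with $f$ inserted at the place of position $p$. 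This equals $a^{k-1}f$ exactly when no $a$ occurs after position $p$, i.e. when $e$ has no ones following the star.
\end{itemize}
So all vertices of the copy are red.

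\textbf{Main obstacle.} The only delicate point is the last check, that the $b$-equivalence class of $\psi_e(f)$ contains a red vertex. This is precisely where the restriction to the special edges $E^*$ is used; the rest is routine verification.
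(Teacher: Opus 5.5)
Your proposal is correct and follows the paper's proof: you embed $L_k$ via $a\mapsto 1$, $b\mapsto 0$, hang copies of $F$ on the special edges by letting the star coordinate range over $V(F)$, and show that every vertex of the copy is $b$-equivalent to a red word $a^{k-1}b^{k-1}f$, using that special edges have no ones after the star. The only differences are presentational: you spell out the injectivity and disjointness checks and split the colour check into three cases, while the paper handles all vertices at once by writing them in the form $(x_1,\dots,x_i,c,b,\dots,b)$.
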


\begin{proof}
Let $V'\subseteq [m]^{2k-1}$ be a set of $(2k-1)$-tuples, such that each element of this set is obtained from some $n$-tuple from $V$ by restricting it to positions in $T$. By identifying $a$ and $b$ with $1$ and $0$ respectively, we observe that $V'$ contains all binary vectors of length $2k-1$ with $k$ ones or $k$ zeros. Consequently, $G^{\square n}(T, a, b, \mathbf{y})$ contains a subgraph isomorphic to the middle layer $L_k$.
For each position $i$ from $T$, say without loss of generality $i=1$,  $V'$ contains all tuples from $V(F)\times \{a, b\}^{2k-2}$, with $(k-1)$ $a$'s and $(k-1)$  $b$'s in the last $2k-2$ positions. This set of vertices gives a copy of $L_k$ with copies of $F$ attached to all edges with direction $i=1$. Since this holds for any direction $i$ from $2k-1$ possible ones, we see that $V$ induces a graph corresponding to $L_k$ with a copy of $F$ glued to each of the edges in $L_k$. 
By taking only the copies of $F$ glued to the special edges $E^* \subseteq E(L_k)$, we see that the graph induced by $V$ contains a copy $L$ of  $L_k(F, a, b)$ as a subgraph.

Every vertex of $L$ restricted to $T$ has a form $(x_1, \ldots, x_i, c, b,b, \ldots, b)$, for some $c\in V(F)$ and $i\in \{k-1, k, \ldots, 2k-2\}$, such that $x_1, \ldots, x_i \in \{a, b\}$ and  excluding the $(i+1)$st position, there are exactly $(k-1)$ $a$'s and $(k-1)$ $b$'s. Each of these vertices is $b$-equivalent to ${\bf x}(c):= (a,\ldots,a, b,\ldots, b,c)$, where $c$ is in the last position. By assumptions ${\bf x}(c)$ is red for any $c\in V(F)$.
Thus all vertices of $L$ are red.
\end{proof}

\begin{proof}[Proof of~\Cref{grapes}]

Let $G$ be a graph on a vertex set $[m]$. Let $r$ be a natural number and $F_1,\dots,F_s$ be  graphs such that $G \xrightarrow{r} \{F_1,\dots,F_s\}$. Let $H$ be a graph and $(a_i,b_i)$ be a pair of adjacent vertices of $F_i$, $i \in [s]$. Fix a sufficiently large $k$ such that $H$ is a subgraph of $L_k(F_i,a_i,b_i)$ for all $i \in [s]$.

Denote the number of copies of $F_i$ in $G$ by $N_i$ for $i \in [s]$, and let $N = N_1+\dots+N_s$. Label all these copies by $F'_j$, $j \in [N]$. If $F'_j$ is a copy of $F_i$, then let $v_j,w_j$ be the vertices of $F_j'$ playing the roles of $a_i$ and $b_i$, respectively.

Let $n \in \N$ be sufficiently large.
Specifically, $n$ is bounded in terms of recursively defined integers $n_1, \ldots, n_N$, where  $n_N\geq 1$, $n_{j-1}\geq n'((2k-1)n_j, m, r)$, and $n \geq n'((2k-1)n_1, m, r)$, and $n'$  is as in Lemma \ref{lem:equivalent}.
Consider an arbitrary $r$-coloring $c$ of $[m]^{n}$, the vertex set of $G^{\square n}$, such that no copy of $H$ is monochromatic.

The main idea of the proof is as follows. We shall find a sequence of nested $n_j$-subcubes $M_j$, $j \le N$, such that in each slice of $M_j$, the subgraph corresponding to $F'_q$, for each $q\in [j]$ is not monochromatic. Since these subcubes are nested, we shall see that in each slice of the last subcube all subgraphs corresponding to $F'_j$, $j\in [N]$ are not monochromatic. This will give contradiction to the Ramsey property $G \xrightarrow{r} \{F_1,\dots,F_s\}$, which shows that such a coloring $c$ does not exist and will complete the proof of \Cref{grapes}.

Recall, that for $\x=(x_1,\dots,x_n)\in [m]^n$,  $I_t(\x)=\{ i \in [n]: x_i\neq t\}$.

\vspace{4mm}

\textbf{Step 1.} Apply \Cref{lem:equivalent} with $w_1$ playing the role of $t$ and $d_1\coloneqq n_1(2k-1)$ playing the role of $d'$. Since  $n\ge n'(d_1,m,r)$, this yields a $d_1$-element subset $S_1' \subseteq[n]$ such that every two $w_1$-equivalent elements $\x, \y$ of the  $d_1$-subcube $M_1' \coloneqq \{\z \in [m]^n: I_{w_1}(\z)\subseteq S_1'\}$ receive the same color.

Split $S_1'$ into $n_1$ ``segments" of $2k-1$  consecutive elements.
Let $S_1$ be the set consisting of the largest element from each segment, i.e., $|S_1|=n_1$.
Let $M_1$ be the  $n_1$-subcube of $M_1'$ defined as follows. Let the set of its variable positions be $S_1$.
For each segment defined above, each $\z \in M_1$  is equal to $v_1$ the $(k-1)$ positions from the segment and it is equal to $w_1$ on the next $(k-1)$ positions from the segment.

For instance, if $n=10, n_1=3, k=2$ and $S_1'=[10]\sm \{2\}$, then $S_1=\{4,7,10\}$,
$M_1'$ consists of $m^9$ elements of the form $(*,w_1,*,\boldsymbol{*},*,*,\boldsymbol{*},*,*,\boldsymbol{*})$. The set $S_1'$, corresponding to the stars, is split into consecutive triples  (segments) and $S_1$ is the set built of the last star from each triple (segment), see bold stars.
Each such segment is getting an assignment of elements $v_1w_1\boldsymbol{*}$.
Then  $M_1$ consists of $m^3$ elements of the form $(v_1,w_1,w_1,\boldsymbol{*},v_1,w_1,\boldsymbol{*},v_1,w_1,\boldsymbol{*})$.

We claim that in each slice of $M_1$, the subgraph corresponding to $F'_1$ is not monochromatic. Indeed, assume the contrary, namely that for some $i' \in S_1$, the subgraph $F$ corresponding to $F_1'$ in the slice of $M_1$ with variable position $i'$ is monochromatic, say red. 
We can assume without loss of generality that $i'$ is the first element of $S_1$, i.e., it is the $(2k-1)$st element of $S_1'$. 
  Let $T$ be the set of first $2k-1$  elements of $S_1'$.
 Observe that the vertices of  red subgraph $F$ restricted to positions in $T$ form $\{v_1\}^{k-1} \times \{w_1\}^{k-1} \times V(F_1')$. Let $\y$ be one of these vertices, restricted to positions in $[n] \backslash T$. Now \Cref{L_hanging} applied with $v_1,w_1,F_1'$ playing the roles of $a,b,F$, respectively, yields a red copy of $L_k(F_1',v_1,w_1)$ and thus a red copy of $H$ in $G^{\square n}$. Here the condition of \Cref{L_hanging} is indeed met: since the elements of $T$ are consecutive in $S_1'$, for any two vertices $\x, \z \in V(T, v_1, w_1, {\bf y})$, if $\x|_{T} \sim_{w_1} \z|_{T}$, then $\x \sim_{w_1} \z$ and so $c(\x)=c(\z)$ by construction.  However, recall that no copy of $H$ is monochromatic by the choice of $c$. This contradiction shows that in each slice of $M_1$, the subgraph corresponding to $F'_1$ is not monochromatic, as claimed. This completes Step 1.

\vspace{4mm}

Let $j\in\N$ with $2\leq j\leq N$. Assume steps $1, \dots, j-1$ have been performed and an  $n_{j-1}$-subcube $M_{j-1}$ of $[m]^{n}$ with the set $S_{j-1}$ of variable positions, $|S_{j-1}|=n_{j-1}$, has been found, such that in each slice of $M_{j-1}$, the subgraph corresponding to $F'_{j-1}$ is not monochromatic.

\vspace{4mm}

\textbf{Step $\boldsymbol{j}$.} The following procedure is almost identical to the one from Step 1. Nonetheless, we explicitly describe it for clarity. Apply \Cref{lem:equivalent} with $w_j$ playing the role of $t$, $d_j\coloneqq n_j(2k-1)$ playing the role of $d'$, $M_{j-1}$ playing the role of $[m]^{n'}$, and $S_{j-1}$ playing the role of the ground set $[n']$. Since $n_{j-1}\geq n'(d_j, m, r)$,   applying \Cref{lem:equivalent} yields a $d_j$-element subset $S_j' \subseteq S_{j-1}$ such that every two elements $\x, \y$ of the  $d_j$-subcube $M_j' \coloneqq \{\z \in M_{j-1}: I_{w_j}(\z) \cap S_{j-1}\subseteq S_j'\}$ satisfying $\x|_{S_{j-1}} \sim_{w_j} \y|_{S_{j-1}}$ receive the same color.

Split $S_j'$ into $n_j$ ``segments" of $2k-1$  consecutive elements.
Let $S_j$ be the set consisting of the largest element from each segment, i.e., $|S_j|=n_j$. 
Let $M_j$ be the  $n_j$-subcube of $M_j'$ defined as follows. Let the set of its variable positions be $S_j$.
For each segment defined above, each $\z \in M_j$  is equal to $v_j$ the $(k-1)$ positions from the segment and it is equal to $w_j$ on the next $(k-1)$ positions from the segment.

We claim that in each slice of $M_j$, the $j$  subgraphs corresponding to $F_1', \ldots, F'_j$ are not monochromatic.  
We have the $F_1', \ldots, F_{j-1}'$ are not monochromatic from previous steps and the fact that $M_j$ is a subcube of $M_{j-1}$. We only need to verify the statement about $F_j'$.
Assume the contrary, namely that for some $i' \in S_j$, 
the subgraph corresponding to $F_j'$ in the slice of $M_j$ with variable position $i'$ is monochromatic, say red. 
We can also assume that $i'$ is the first element of $S_j$, i.e., it is the $(2k-1)$st 
element of $S'_j$. 
We can assume that $i'=1$, since the other cases are analogous. Observe that the vertices of this red subgraph restricted to positions in the set $T$ of the first $2k-1$ elements of $S_j'$ form $\{v_j\}^{k-1} \times \{w_j\}^{k-1} \times V(F_j')$. Let $\y$ be one of these vertices, restricted to positions in $[n]\backslash T$. Now \Cref{L_hanging} applied with $v_j,w_j,F_j'$ playing the roles of $a,b,F$, respectively, yields a red copy of $L_k(F_j',v_j,w_j)$ and thus a red copy of $H$ in $G^{\square n}$. Here the condition of \Cref{L_hanging} is indeed met: since the elements of $T$ are `consecutive' in $S_j'$, for any two vertices $\x, \z \in V(T, v_j, w_j, {\bf y})$, if $\x|_{T} \sim_{w_j} \z|_{T}$, then $\x|_{S_{j-1}} \sim_{w_j} \z|_{S_{j-1}}$ and so $c(\x)=c(\z)$ by construction. However, recall that no copy of $H$ is monochromatic by the choice of $c$. This contradiction shows that in each slice of $M_j$, the subgraph corresponding to $F'_j$ is not monochromatic, as claimed. This completes Step $j$.

\vspace{4mm}

After $N$ steps, we find a sequence of nested  $n_j$-subcubes $M_j$, $j \le N$, such that in each slice of $M_j$, the subgraph corresponding to $F'_j$ is not monochromatic. Since these subcubes are nested, we see that any subgraph corresponding to some $F'_j$ in each slice of the last subcube is not monochromatic. However, every $G$-slice of $G^{\square n}$ contains a monochromatic subgraph corresponding to some $F'_j$ by the assumption that $G \xrightarrow{r} \{F_1,\dots,F_s\}$. 
This contradiction shows that, provided $n$ is sufficiently large, every $r$-coloring of $[m]^n$ contains a monochromatic copy of $H$. This completes the proof of \Cref{grapes}.
\end{proof}

%%%%%%%%%%%%%%%%%%%%%%%%%%%%%%%%%
%%%%%%%%%%%%%%%%%%%%%%%%%%%%%%%%%
\section{Proofs of Theorems \ref{arrow_layered}  and \ref{arrow_cycles} } \label{sec:proofs}
%%%%%%%%%%%%%%%%%%%%%%%%%%%%%%%%%
%%%%%%%%%%%%%%%%%%%%%%%%%%%%%%%%%

\begin{proof}[Proof of Theorem \ref{arrow_layered}]

We need to show that for any $H$ that is a layered graph, any natural number  $r$,  and any  $G$ graph of chromatic number $r+1$, $G^{\square n} \xrightarrow{r} H$, for sufficiently large $n$.
We see that $G\xrightarrow{r} K_2$ and $H$ is a subgraph of the middle layer graph $L_k$ for some $k$.
Thus $H$ is a subgraph of $L_k=L_k(F, a, b)$, for $F=K_2$ on vertices $a, b$. Thus the result follows from Theorem \ref{grapes}.
\end{proof}

\begin{proof}[Proof of Theorem \ref{arrow_cycles}]
Let  $G$ be a graph on $m$ vertices and $r,\ell$ be integers.

For the first part of the theorem, assume that $G \xrightarrow{r}\{C_3, C_5, ,\dots, C_{2\ell+1}\}$ and let  $s\ge\ell+2$. Let $k$ be sufficiently large compared to $s$.
We see that any edge of the middle layer $L_k$ is contained in an even cycle of length $6, 8, \ldots, 2s-2$, and $2s$.
Consider a union $Q$ of  $L_k$ and $C_{2i+1}$, $1\leq i\leq \ell$, that share exactly one edge.
It contains a union of $C_{2s- 2i +2}$ and  $C_{2i+1}$, that share exactly one edge. This union in turn contains a $C_{2s+1}$ as desired. 
Since $Q$ is a subgraph of $L_k(C_{2i+1}, a, b)$, where $ab$ is any edge of $C_{2i+1}$, Theorem \ref{grapes} implies the result.

For the second part of the theorem, assume that $\chi(G)>2r$. Then we have that coloring vertices of $G$ in $r$ colors results in a monochromatic subgraph with chromatic number greater than $2$, that then contains an odd cycle. This cycle has length at most $m=|V(G)|$. Thus $G \xrightarrow{r}\{C_3, C_5, ,\dots, C_{2\ell+1}\}$, for $\ell= \lfloor (m-1)/2 \rfloor$. Let $2s +1 \geq m+4$, then  $s\geq \lfloor (m-1)/2 \rfloor +2$, i.e., $s\geq \ell+2$. By the first part of the theorem, if $s\geq \ell +2 $, then $G^{\square n} \xrightarrow{r} C_{2s+1}$. 
\end{proof}

%%%%%%%%%%%%%%%%%%%%%%%%%%%%%%%%%
%%%%%%%%%%%%%%%%%%%%%%%%%%%%%%%%%
\section{Necessary conditions for unavoidable graphs}
\label{sec:other}
%%%%%%%%%%%%%%%%%%%%%%%%%%%%%%%%%
%%%%%%%%%%%%%%%%%%%%%%%%%%%%%%%%%

In this section, we investigate the opposite direction of our main results. More specifically, for a graph $G$ and a positive integer $r$, we are looking for necessary conditions a graph $H$ must satisfy if $G^{\square n} \xrightarrow{r} H$ for some $n \in \N$. To this end, for each $n \in \N$ we construct an $r$-coloring of the vertices of $G^{\square n}$ in which all monochromatic components do not satisfy these conditions. All our explicit colorings are special cases of the following general construction.

\begin{Construction}\label{sum_mod_r}
    Let $G$ be a graph and $n, r, t$ be positive integers and  $c':V(G) \to [r]$ be a coloring. For $\x\in V(G^{\square n})$ let  $c'(\x)=(c'(x_1),\dots,c'(x_n))$. Define an $(t, r, c')$-coloring of the vertices of $G^{\square n}$ by 
\begin{equation*}
   c(\x)= \begin{cases}
   0, & \mbox{if } 0\leq \|c'(\x)\|_1 <t,\\
   1, & \mbox{if }  t\leq \|c'(\x)\|_1 <2t,\\
   &\vdots\\
   r-1, & \mbox{if } (r-1)t\leq \|c'(\x)\|_1 <rt,
   \end{cases}
\end{equation*}
where the addition is modulo $rt$.

\end{Construction}

\begin{Lemma} \label{Obs:1}
Any monochromatic component of $G^{\square n}$ under a $(1,r,c')$-coloring $c$ is a Cartesian product of some monochromatic components of $G$ under $c'$. In particular, if each monochromatic component of $G$ under $c'$ is $k$-partite (or cubical), then each monochromatic component of $G^{\square n}$ under $c$ is $k$-partite (or cubical, respectively).
\end{Lemma}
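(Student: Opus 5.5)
With $t=1$ the coloring is $c(\x)=\|c'(\x)\|_1 \bmod r$, where $c'$ takes values in $[r]$, read modulo $r$. The plan is to track how $c$ changes along an edge and then describe a monochromatic component coordinate by coordinate.

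First take an edge $\x\y$ of $G^{\square n}$. By definition $\x$ and $\y$ differ in exactly one position $i$, and $x_iy_i\in E(G)$. Then
\[
c(\x)-c(\y)\equiv c'(x_i)-c'(y_i) \pmod r .
\]
Since $c'(x_i),c'(y_i)\in[r]$, this difference is $0$ modulo $r$ exactly when $c'(x_i)=c'(y_i)$. So an edge of $G^{\square n}$ is monochromatic under $c$ if and only if the corresponding edge of the $G$-slice is monochromatic under $c'$.

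Now fix a vertex $\x$ and let $C_i$ be the monochromatic component of $x_i$ in $G$ under $c'$. Call a vertex $\y$ of $G^{\square n}$ reachable if it can be joined to $\x$ by a path of edges that are monochromatic under $c$. Walking along such a path, each step changes one coordinate along a $c'$-monochromatic edge of $G$. Hence the $i$-th coordinate stays inside $C_i$ throughout, and the component of $\x$ under $c$ is contained in $C_1\square\cdots\square C_n$.

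Conversely, every vertex of $C_1\square\cdots\square C_n$ is reachable from $\x$. To see this, change the coordinates one at a time along monochromatic paths in the respective $C_i$. Every vertex of this product has the same value of $\|c'\|_1 \bmod r$ as $\x$, so it has the same $c$-color. Every edge of the product is an edge of $G^{\square n}$, and by the edge computation it is monochromatic. Therefore the component of $\x$ equals $C_1\square\cdots\square C_n$, with all edges of $G^{\square n}$ between its vertices present. Strictly speaking, the $C_i$ should be taken as the subgraphs formed by monochromatic edges; the same argument works if they are taken as induced subgraphs.

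For the consequences, suppose each $C_i$ is $k$-partite with $k$-coloring $\phi_i$. Then $\sum_i\phi_i(y_i) \bmod k$ is a proper $k$-coloring of the product, because an edge changes exactly one coordinate along an edge of some $C_i$. If each $C_i$ is cubical, then the product is a subgraph of a product of hypercubes, which is again a hypercube, exactly as in Lemma~\ref{lem:embedding-cubical}.

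The only subtle step is the mod-$r$ point in the edge computation: the color difference vanishes modulo $r$ only if $c'(x_i)=c'(y_i)$, which holds because $c'$ takes values in a complete residue system modulo $r$.
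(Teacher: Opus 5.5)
Your proof is correct and follows essentially the same route as the paper. Both rest on the same observation: an edge of $G^{\square n}$ is monochromatic under $c$ exactly when its changed coordinate moves along a $c'$-monochromatic edge of $G$, since values in $[r]$ are distinct modulo $r$; both then use that $k$-partiteness and cubicality are preserved under Cartesian products. Your two-inclusion argument identifying the component of $\x$ with $C_1\square\cdots\square C_n$, and your explicit coloring $\sum_i\phi_i(y_i) \bmod k$, make precise what the paper states more loosely when it says that color classes induce products $G_{i_1}\square\cdots\square G_{i_n}$.
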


\begin{proof}
Let  $c':V(G) \to \mathbb [r]$ be a coloring. Any two adjacent vertices $\x = (x_1,\dots, x_n)$ and $\x' = (x'_1,\dots, x'_n)$ of $G^{\square n}$ differ in exactly one coordinate, say $x_j\neq x'_j$.  If $c(\x)=c(\x')$ for an $(1,r,c')$-coloring $c$, then $c'(x_j)=c'(x_j')$.
Let $V_i = \{v \in V(G): c'(v)=i\}$ and $G_i=G[V_i]$, $i \in [r]$. Each subgraph of $G^{\square n}$ induced by some color class under $c$
has the form $G_{i_1}\square\cdots \square G_{i_n}$ for some $i_1,\dots,i_n \in [r]$.
We conclude the proof by noting that the Cartesian product of $k$-partite graphs is $k$-partite and the Cartesian product of cubical graphs is cubical.
\end{proof}

We can now immediately obtain a necessary condition that $H$ is bipartite for a range of values of $\chi(G)$ and $r$.

\begin{Corollary} If $\chi(G)\leq 2r$ and $G^{\square n} \xrightarrow{r} H$, then $H$ is bipartite. If $\chi(G)> 2r$, then there is a non-bipartite graph $H$ such that $G^{\square n} \xrightarrow{r} H$.
\end{Corollary}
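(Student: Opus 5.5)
For the first part, take an optimal proper coloring of $G$ with $\chi(G)\le 2r$ colors and group its color classes into $r$ groups of at most two classes each. This defines a coloring $c':V(G)\to[r]$ in which every color class is a union of at most two independent sets. So every subgraph of $G$ induced by a color class of $c'$, and in particular every monochromatic component, is bipartite.

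Now take the $(1,r,c')$-coloring $c$ of $G^{\square n}$ from Construction~\ref{sum_mod_r}. By Lemma~\ref{Obs:1} with $k=2$, every monochromatic component of $G^{\square n}$ under $c$ is bipartite. Suppose $G^{\square n}\xrightarrow{r}H$ for some $n$. Then $c$ contains a monochromatic copy of $H$. Each connected component of that copy lies inside one monochromatic component, so it is a subgraph of a bipartite graph. Hence $H$ is bipartite.

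A minor point is that $H$ need not be connected. This causes no trouble: each component of $H$ is bipartite, so $H$ is bipartite.

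For the second part, suppose $\chi(G)>2r$. Any $r$-coloring of $V(G)$ has a color class inducing a subgraph of chromatic number greater than $2$, so that class contains an odd cycle of length at most $|V(G)|$. This gives $G\xrightarrow{r}\{C_3,\dots,C_{2\ell+1}\}$ with $\ell=\lfloor(|V(G)|-1)/2\rfloor$. Theorem~\ref{arrow_cycles} then yields $n$ with $G^{\square n}\xrightarrow{r}C_{2s+1}$ for any $s$ with $2s+1\ge |V(G)|+4$, and $H=C_{2s+1}$ is non-bipartite.

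There is no real obstacle here. The only step needing care is checking that grouping the color classes in pairs makes each monochromatic component bipartite, which holds because two independent sets form a bipartition.
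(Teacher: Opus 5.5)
Your proposal is correct and follows the paper's own argument. You group the classes of a proper $\chi(G)$-coloring into $r$ pairs, apply Lemma~\ref{Obs:1} to the resulting $(1,r,c')$-coloring, and for $\chi(G)>2r$ invoke Theorem~\ref{arrow_cycles} by re-deriving its ``in particular'' clause just as the paper does, with your handling of disconnected $H$ being a detail the paper leaves implicit.
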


\begin{proof}
If $\chi(G)\leq 2r$, then there is an $r$-coloring $c':V(G)\to [r]$ with bipartite monochromatic components. Indeed, take a proper $\chi(G)$-coloring of $V(G)$ and split the color classes into $r$ groups of size at most $2$. Now \Cref{Obs:1} applied to this $c'$ shows that for each $n\in \N$, there is an $r$-coloring of the vertices of $G^{\square n}$ with bipartite monochromatic components. 
If $\chi(G)> 2r$,   \Cref{arrow_cycles} gives a non-bipartite graph $H$, namely a long odd cycle, and $n \in \N$ such that $G^{\square n} \xrightarrow{r} H$. 
\end{proof}

Thus $\chi(G)=2r$ gives a natural threshold for a necessary condition on $H$ to be bipartite. Instead of the condition on $H$ being bipartite, we consider layered $H$ and prove the respective Proposition \ref{prop:necessary}.
Recall that it claims for positive integers $s$ and $r$, $s/2<r<s$, that
 $$\mbox {if } C_{2s+1} ^{\square n} \xrightarrow{2} H  \mbox{ or } K_s^{\square n} \xrightarrow{r} H, \mbox{ then }H \mbox{ is layered}.$$

\begin{proof}[Proof of Proposition \ref{prop:necessary}]
(1) Let $V(C_{2s+1})=[2s+1]$. Let  $c':V(C_{2s+1}) \rightarrow [3]$ be a unique proper vertex-coloring such that $c'(1)=1, \ c'(2)=2,\ c'(3) = 3$ and $c'(x)\neq 2$ for any $x\in [2s+1]\sm\{2\}$. Consider a $(2,2,c')$-coloring  $c$ of the vertices of $C_{2s+1}^{\square n}$. In other words, for $i\in \{0,1\}$ and $\x=(x_1,\dots,x_n) \in [2s+1]^n$, we have $c(\x)=i$ if and only if $\|c'(\x)\|_1 \in \{2i, 2i+1\} \pmod {4}$.

Let $\x, \y \in [2s+1]^{n}$ be adjacent and both have color $i$ under $c$. 
In particular, $\x$ and $\y$ differ in exactly one coordinate, say  $j^{\mathrm{th}}$, and thus $\|c'(\x)-c'(\y)\|_1 = |c'(x_j)-c'(y_j)| \in \{1,2\}$. 
Without loss of generality, $\|c'(\x)\|_1 \equiv 2i+1 \pmod {4}$ and $\|c'(\y)\|_1 \equiv 2i \pmod {4}$.
Hence we have $\|c'(\x)-c'(\y)\|_1= 1$, and thus either $\{x_j,y_j\}=\{2,1\}$ or $\{x_j,y_j\}=\{3,2\}$. 

Now let $C$ be a monochromatic component of $C_{2s+1}^{\square n}$.
Since every monochromatic edge joins a vertex with $\|c'(\cdot)\|_1\equiv 2i \pmod 4$ to one with $\|c'(\cdot)\|_1\equiv 2i+1 \pmod 4$, the value of $\|c'(\cdot)\|_1 \pmod 4$ attains only the two consecutive values $4k+2i$ and $4k+2i+1$, for some integer $k$,  on the vertices of $C$.
Moreover, the argument above shows that, in each of the $n$ projections of $C$, the only possible edges are $\{1,2\}$ and $\{2,3\}$.

Let $f$ be any injective mapping from $[2s+1]$ to the vertex set $\{0,1\}^d$ of the binary hypercube $Q_d$ for some $d$ such that  it preserves the weight, i.e. that $\|f(x)\|_1=c'(x)$ for each $x \in [2s+1]$, and $f(2)$ is adjacent to both $f(1)$ and $f(3)$. Note that the function $f$ need not preserve all edges of $C_{2s+1}$, only these two possible projection edges.
To see that this function exists, suppose that $d$ is sufficiently large, map 2 to any vertex from the second vertex layers, map $1$ and $3$ to any of the neighbors of $f(2)$ in the first and third layers, respectively, and injectively map each of the remaining $x \in [2s+1]$ arbitrarily to either the first or the third vertex layer depending on $c'(x)$. Now we define the map $g: V(C)\rightarrow V(Q_{nd})=\{0,1\}^{nd}$ by letting $g(\x)$ 
to be a concatenation of $f(x_1), f(x_2), \ldots, f(x_n)$ into a single binary vector. For each vertex $\x$ in $C$, we have $\|f(\x)\|_1=\|c'(\x)\|_1$. Since 
$\|c'(\x)\|_1$ takes two consecutive values, $g$ maps vertices of $C$ into two consecutive vertex layers of $Q_{nd}$. Since the map $g$ preserved adjacencies and is injective, it maps $C$ into an edge-layer of $Q_{dn}$. Thus $C$ is layered, as desired.

(2) Consider an $(2,r,c')$-coloring $c$ of the vertices of $K_s^{\square n}$, where $c'$ is a proper $s$-coloring of $K_s$. In other words, if $[s]$ is the vertex set of $K_s$, then for each $\x \in [s]^n$ and $0\le i < r$, we have $c(\x)=i$ if and only if $\|\x\|_1 \in \{2i, 2i+1\} \pmod {2r}$. It remains to show that monochromatic components of $K_s^{\square n}$ under $c$ are layered.

Let $\x, \y \in [s]^{n}$ be adjacent and both have color $i$ under $c$.  Then $\|\x\|_1=2i + \epsilon_x+ k_x\cdot 2r$ and $\|\y\|_1 = 2i + \epsilon_y + k_y\cdot 2r$, for some $k_x, k_y\in \mathbb Z$ and $\epsilon_x, \epsilon_y\in \{0,1\}$.
Since $\x$ and $\y$ differ in exactly one coordinate, $1 \le |\|\x\|_1-\|\y\|_1| \le s-1 < 2r-1$. We also have that $ |\|\x\|_1-\|\y\|_1| = |\epsilon_x- \epsilon_y + (k_x-k_y) 2r| $.
This implies that $k_x=k_y$ and $\epsilon_x=1-\epsilon_y$.
 Therefore, the function $\|\cdot\|_1$ attains only 2 consecutive values on each of the monochromatic components of $K_s^{\square n}$.

Let $C$ be a monochromatic component of $K_s^{\square n}$, $i \in \N$, and $k\in \mathbb{Z}$ be such that $\|\x\|_1\in \{2i + 2kr, 2i+1 + 2kr\}$ for each $\x \in V(C)$. To show that $C$ is layered, we map each $j \in [s]$ to the binary vector of length $s$ with $s-j$ zeros followed by $j$ ones. This naturally maps the vertices of $K_s^{\square n}$ to the vertices of $Q_{sn}$. This mapping preserves the function $\|\cdot\|$, e.g., the vertices of $C$ are mapped into the $(2i + 2kr)^{\mathrm{th}}$ and $(2i+1 +2kr)^{\mathrm{th}}$ vertex layers of $Q_{sn}$. Moreover, this mapping preserves the adjacencies  since consecutive elements of $[s]$ are mapped into binary vectors at Hamming distance one. Thus $C$ is a layered graph, as desired.
\end{proof}

Note that the condition $s/2 <r<s$ on $r$  in the statement of \Cref{prop:necessary} is almost best possible for $G=K_s$ in the following sense.

\begin{Corollary} If $r \geq s$ and  $K_s^{\square n} \xrightarrow{r} H$, then $H$ is edgeless. If $r< s/2$, there a non-layered $H$, such that   $G^{\square n} \xrightarrow{r} H$.
\end{Corollary}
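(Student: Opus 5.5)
The corollary has two independent parts, and I will treat them separately.

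\emph{First part: $r\ge s$.} We have $\chi(K_s^{\square n})=\chi(K_s)=s\le r$. Fix a proper $s$-coloring of $K_s^{\square n}$, for instance $\x\mapsto \|\x\|_1 \pmod s$. This is proper because adjacent vertices differ in exactly one coordinate, by an amount between $1$ and $s-1$. Regard it as an $r$-coloring in which some colors are unused. Every color class is then independent, so every monochromatic subgraph is edgeless. Hence if $K_s^{\square n}\xrightarrow{r} H$, the graph $H$ has no edges. (If $H$ has no edges the arrow holds trivially once $n$ is large, so this direction is all that is claimed.)

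\emph{Second part: $r<s/2$.} Here $\chi(K_s)=s>2r$, so I apply \Cref{arrow_cycles} with $G=K_s$. It suffices to take $2s'+1\ge s+4$. Then for some $n$ every $r$-coloring of $K_s^{\square n}$ contains a monochromatic odd cycle $C_{2s'+1}$. Setting $H=C_{2s'+1}$, it remains to check that $H$ is not layered. Every layered graph is a subgraph of a hypercube and is therefore bipartite, whereas $H$ is an odd cycle. So $H$ is non-layered and $K_s^{\square n}\xrightarrow{r} H$. (The statement writes $G$, but I read $G=K_s$.)

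Neither step is a real obstacle. The only points needing care are the bookkeeping that the hypothesis $\chi(G)>2r$ of \Cref{arrow_cycles} is exactly $s>2r$, and the fact that layered graphs are bipartite. Together with \Cref{prop:necessary}, this leaves $r=s/2$ as the only undecided value.
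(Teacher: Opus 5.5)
Your proof is correct. The first part matches the paper's: since $\chi(K_s^{\square n})=\chi(K_s)=s\le r$, a proper coloring exists, and your explicit choice $\x\mapsto\|\x\|_1 \pmod s$ works.

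For the second part you take a much heavier route than the paper. You invoke \Cref{arrow_cycles}, whose proof goes through \Cref{grapes} and the Cube Layered Lemma, to force a long odd cycle $C_{2s'+1}$ with $2s'+1\ge s+4$ for some large $n$. The paper instead observes that when $s>2r$, any $r$-coloring of $K_s$ has a color class with at least $\lceil s/r\rceil\ge 3$ vertices, and these span a triangle. So $K_s\xrightarrow{r}C_3$ already holds for $n=1$, and hence for every $n$ by restricting to a single $K_s$-slice; $C_3$ is non-layered.

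This pigeonhole observation is exactly the hypothesis $K_s\xrightarrow{r}\{C_3\}$ (the case $\ell=1$) that \Cref{arrow_cycles} requires. Your argument therefore already contains the elementary proof and then adds the full Ramsey machinery on top. The paper's route gives $n=1$ and the smallest witness $H=C_3$, without depending on the main theorems. Yours gives the extra information, not needed here, that odd cycles too long to fit inside $K_s$ are also forced in large Cartesian powers.
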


\begin{proof}
If $r\ge s$, then for any $n\in \N$, there is a proper $r$-coloring of the vertex set of $K_s^{\square n}$ because $\chi(K_s^{\square n})=\chi(K_s)=s$. If $r<s/2$, then for a non-layered graph $H=C_3$, the pigeonhole principle shows that $K_s^{\square n} \xrightarrow{r}C_3$ already for $n=1$.
\end{proof}

However, the case $r=s/2$ remains elusive. We do not know if there exists a non-layered graph $H$ such that $K_{2r}^{\square n} \xrightarrow{r}H$ for some $r\ge 2$ and $n \in \N$. We can only show that such a graph, if exists, must satisfy the following restrictive properties. Let $\Theta_{3,3,3}$ be a graph formed by three paths between two distinct vertices that share only their endpoints. Note that the graph $\Theta_{3,3,3}$ is the smallest cubical, non-layered, $C_4$-free graph.  

\begin{Proposition} \label{2r=s}
    Let $H$ be a graph and $r\ge2$ be an integer. If there exists $n$ such that $ K_{2r}^{\square n}\xrightarrow{r}H$, then $H$ is cubical, $C_4$-free, and $\Theta_{3,3,3}$-free.
\end{Proposition}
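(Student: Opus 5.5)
We use the fact that $K_{2r}^{\square n}\xrightarrow{r}H$ forces $H$ to appear inside a single monochromatic component of \emph{every} $r$-coloring of $K_{2r}^{\square n}$. So it suffices to exhibit, for every $n$, colorings whose monochromatic components are respectively cubical, $C_4$-free and $\Theta_{3,3,3}$-free. We plan to use two colorings, both special cases of Construction~\ref{sum_mod_r}.

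\emph{Cubicality.} Split $V(K_{2r})=[2r]$ into $r$ pairs and let $c'$ color the two vertices of the $i$th pair with color $i$. Every monochromatic component of $K_{2r}$ under $c'$ is a $K_2$, which is cubical. By Lemma~\ref{Obs:1}, every monochromatic component of the $(1,r,c')$-coloring of $K_{2r}^{\square n}$ is a Cartesian product of $K_2$'s and single vertices, hence cubical. Therefore $H$ is cubical.

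\emph{$C_4$- and $\Theta_{3,3,3}$-freeness.} Take $c'$ to be the identity on $[2r]$ and use the $(2,r,c')$-coloring. Thus $\x\in[2r]^n$ gets color $i$ iff $\|\x\|_1\in\{2i,2i+1\}\pmod{2r}$. Adjacent vertices differ in one coordinate, by some $\delta$ with $1\le|\delta|\le 2r-1$. They can share a color only if the sums lie in the two residues $2i$ and $2i+1$ of the same class, so $\delta\equiv\pm1\pmod{2r}$.

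It is convenient to read each coordinate in $\mathbb Z_{2r}$. Every monochromatic edge then joins a vertex $\x$ of ``level'' $2i$ to the vertex $\x+e_j$ (coordinate $j$ increased by $1$ cyclically) of level $2i+1$. Each monochromatic component is bipartite between the two levels, and every edge goes ``up'' by some unit vector $e_j$ mod $2r$.

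\emph{No $C_4$.} Consider a $4$-cycle $u,a,v,b$ with $u,v$ in the lower level. Then $a=u+e_i=v+e_j$ and $b=u+e_k=v+e_l$. This gives $v=u+e_i-e_j=u+e_k-e_l$ in $\mathbb Z_{2r}^n$ with $i\ne j$ (since $u\neq v$).

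Because $2r\ge4$, the vector $e_i-e_j$ determines the pair $(i,j)$. Hence $i=k$, so $a=b$, a contradiction.

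\emph{No $\Theta_{3,3,3}$.} Let $u,v$ be the two branch vertices of a $\Theta_{3,3,3}$; they lie on opposite levels, say $u$ lower. Each path has the form $u, u+e_i, u+e_i-e_j, u+e_i-e_j+e_k=v$, where $j\ne i$ and $j\ne k$ because the vertices are distinct.

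So the vector $v-u=e_i+e_k-e_j$ is the same for all three paths. Since $2r\ge4$, its coordinates (values in $\{-1,1,2\}$ on the support) determine $j$ and the multiset $\{i,k\}$. If $i=k$, the path is unique. If $i\neq k$, there are at most two orderings, giving at most two internally disjoint paths. Either way we cannot have three, a contradiction.

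The main delicacy is exactly this step: checking carefully that cyclic wrap-around (the difference $2r-1$) never creates extra coincidences. This is where $r\ge2$, i.e.\ modulus $2r\ge4$, is used, so that $\pm1,\pm2$ are distinct nonzero residues.
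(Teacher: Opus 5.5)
Your proof is correct. For $C_4$- and $\Theta_{3,3,3}$-freeness it follows the paper's approach, but for cubicality it takes a different route.

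\textbf{Cubicality.} The paper uses a single coloring for all three properties: the $(2,r,c')$-coloring with $c'$ a proper coloring of $K_{2r}$. It notes that every monochromatic edge changes one coordinate by $\pm1 \pmod{2r}$. So each projection of a monochromatic component is a subgraph of the even cycle $C_{2r}$, and Lemma~\ref{lem:embedding-cubical} then gives cubicality.

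You instead use a second coloring, the $(1,r,c')$-coloring with $c'$ pairing up the vertices of $K_{2r}$. Lemma~\ref{Obs:1} then shows the monochromatic components are literally copies of $Q_n$. This is more elementary and avoids the projection lemma. What it gives up is the paper's stronger by-product: a single $r$-coloring whose monochromatic components are simultaneously cubical, $C_4$-free and $\Theta_{3,3,3}$-free. Your first coloring cannot help with the other two properties, since $Q_n\supseteq C_4$.

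\textbf{$C_4$- and $\Theta_{3,3,3}$-freeness.} Here you use the same coloring and the same underlying idea as the paper: monochromatic edges join the residues $2i$ and $2i+1$, so every edge goes up by a unit vector. Your bookkeeping in $\mathbb{Z}_{2r}^n$ is cleaner in two ways. It rules out $4$-cycles inside a single $K_{2r}$-slice uniformly, whereas the paper treats that case separately via $r=2$. It also replaces the paper's Hamming-distance case analysis by reading $j$ and $\{i,k\}$ directly off $v-u$, which even shows the path is unique when $i=k$.

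\textbf{A small imprecision.} A monochromatic copy of a disconnected $H$ need not lie in a single monochromatic component, only in a single color class. This is harmless: disjoint unions of cubical graphs are cubical, and $C_4$ and $\Theta_{3,3,3}$ are connected.
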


\begin{proof}
To prove the statement, we show an $r$-coloring of the vertices of $K_{2r}^{\square n}$ in which every monochromatic component is cubical, $C_4$-free, and $\Theta_{3,3,3}$-free. We employ the construction described in Construction \ref{sum_mod_r}. 
Specifically, let $c$ be a $(2, r, c')$-coloring of $V(K_{2r}^{\square n})$, where  $c'$ is a proper vertex coloring of $K_{2r}$.
So if $[2r]$ is the vertex set of $K_{2r}$, then for each $\x \in [2r]^n$ and $0\le i < r$, we have $c(\x)=i$ if and only if $\|\x\|_1 \in \{2i, 2i+1\} \pmod {2r}$

Consider two adjacent vertices $\x, \x'$  of the same color in $K_{2r}^{\square n}$  under $c$. By the definition of the Cartesian product, $\x$ and $\x'$ differ in exactly one coordinate $j \in [n]$. Let $x_j$ and $x'_j$ be the values of this coordinate. We have that $x_j-x'_j \equiv \pm 1 \pmod{2r}  $. Let $H$ be a monochromatic component of $K_{2r}^{\square n}$. We see that each projection of $H$ in $K_{2r}^{\square n}$ is contained in a cycle $C_{2r}$ formed by vertices $(1,\dots, 2r)$.
Thus, each projection is a cubical graph. By Lemma \ref{lem:embedding-cubical}, $H$ is cubical as well.

Next we prove that each monochromatic component is $C_4$-free.
Suppose $\x_1, \x_2, \x_3,\x_4$ form a monochromatic $C_4$ in this particular order that does not lie entirely in any $K_{2r}$-slice. 
We know that $\|\x_j\|_1\in \{2i,2i+1\} \pmod{2r}$ for some $i$, and $j \in [4]$.
Without loss of generality, assume that $\x_1$ has the smallest sum of coordinates $\|\x_1\|_1$ among $\x_1, \x_2, x_3,$ and $\x_4$. Then we get $\x_2$ by increasing the $k_1$th of $\x_1$'s coordinates by $a$ for some $a < 2r$, namely, $\|\x_2\|_1= \|\x_1\|_1+a$. Note that $a=1$ if $\|\x_1\|_1 \equiv 2i \pmod{2r}$ and $a=2r-1$ if $\|\x_1\|_1 \equiv 2i+1 \pmod{2r}$, by minimality of $\|\x_1\|_1$.
Similarly, we get $\x_4$ by increasing the $k_2$-th coordinate of $\x_1$ by the same value $a$, so $\|\x_4\|_1= \|\x_1\|_1+a$.
The fourth vertex must complete the cycle, so we get $\x_3$ by increasing both the $k_1$-th and the $k_2$-th coordinates of $\x_1$ by $a$, so
$\|\x_3\|_1 = \|\x_1\|_1+2a$. However, $\|\x_1\|_1,\|\x_1\|_1+a, \|\x_1\|_1+2a \in \{2i, 2i+1\} \pmod{2r}$ is a contradiction, as $a \in \{1,2r-1\}$. 
If a copy of $C_4$ lies in a $K_{2r}$-slice of a monochromatic component $H$, then this copy is contained in some projection of $H$ in $K_{2r}^{\square n}$. However, by the above argument we know that every projection of $H$ is contained in a cycle $C_{2r}$. So we have $r=2$, and this $C_4$ uses all vertices of the slice. By construction, no slice is monochromatic, a contradiction.

Finally suppose there is a copy of $\Theta_{3,3,3}$ in color $j$. Let vertices $\x,\y$ in this copy be connected by three internally vertex-disjoint paths of length $3$. Every vertex $\textbf{v}$ satisfies $\|\textbf{v}\|_1 \in \{2j,2j+1\} \pmod{2r}$, and adjacent vertices belong to different congruence classes. Without loss of generality, let $\|\x\|_1 \equiv 2j$ and $\|\y\|_1 \equiv 2j+1 \pmod {2r}$.
Since $\x$ and $\y$ are joined by a path of length~3, their Hamming distance is at most $3$. 
 The distance cannot be $1$, as otherwise $\x$ and $\y$ would be adjacent, creating a monochromatic $C_4$. 
Suppose that the Hamming distance between $\x$ and $\y$ is $2$, and let $k_1$ and $k_2$ be the coordinates in which they differ. Any $\x$--$\y$ path of length~$3$ must change only these two coordinates, with one of them changed twice. The repeated coordinate cannot be changed in two consecutive steps, since the second change would undo the first and revisit a previously visited vertex. Hence the coordinate changes must alternate, so the only possible orders are $k_1k_2k_1$ and $k_2k_1k_2$. Therefore there are at most two $\x$--$\y$ paths of length~$3$, a contradiction.
Hence $\x$ and $\y$ differ in exactly three coordinates.
Fix one $\x$--$\y$ path, and let $k_1,k_2,k_3$ be the coordinates changed by its first, second, and third edges. The middle edge goes from congruence class $2j+1$ to $2j$, so the $k_2$-th coordinate changes by $-1 \pmod{2r}$ .
Any length-$3$ path from $\x$ to $\y$ must change these same three coordinates, one at a time, in some order. However, changing the $k_2$-th coordinate first or last produces a vertex $v$ with $\|\textbf{v}\|_1 \notin \{2j,2j+1\} \pmod{2r}$, so $k_2$ must always be changed second. Thus the only possible orders are $k_1k_2k_3$ and $k_3k_2k_1$, yielding at most two internally vertex-disjoint paths of length~$3$, contradicting the existence of a copy of $\Theta_{3,3,3}$.
\end{proof}

%%%%%%%%%%%%%%%%%%%%%%%%%%%%%%%%%
\section{Forcing monochromatic $H$ in $2$-colorings of Cartesian powers} \label{sec:2-col}
%%%%%%%%%%%%%%%%%%%%%%%%%%%%%%%%%

The smallest non-trivial number of colors used on the vertices of a Cartesian power is two. Even in this case it is not clear what condition on $G$ gives a positive Ramsey-type result.

\begin{Question}\label{q1}  Let $H$ be a graph. Is it true that there is a constant $\chi$ such that  for any $G$ of chromatic number $\chi$ and sufficiently large $n$ we have that 
   $G^{\square n} \xrightarrow{2} H?$ 
   \label{ques:chi-c4}
\end{Question}

We observe that we can restrict our attention to cubical non-layered graphs $H$.  
For that,  consider a graph $G$ of girth larger than $|H|$ and arbitrarily large chromatic number.  Since any projection of $H$ in $G^{\square n}$ has order at most $|H|$, it is a forest, that is a cubical graph. Then by Lemma \ref{lem:embedding-cubical} $H$ is cubical.  If $H$ is layered, then the answer to the above question is yes for $\chi \geq 3$ by Theorem \ref{arrow_layered}.

For the rest of this section we shall treat $H=C_4$, the smallest cubical non-layered graph. 

Observe that $K_7^{\square 1} \xrightarrow{2} C_4$ easily follows by the pigeonhole principle. Similarly one can check that $K_5^{\square 2} \xrightarrow{2} C_4$. Indeed, in any red/blue coloring of $[5]^2$, there are three slices, say w.l.o.g. $\{i\}\times [5]$, $i\in \{1, 2, 3\}$ that each have three red vertices, thus four of these form a red $C_4$.  In the proposition below, we shall show that $\chi=5$ is not sufficient in Question \ref{q1}. On the other hand, if the chromatic number is large as a function of the odd girth or order,  we shall show that $C_4$ is arrowed by  $G^{\square n}$ in two colors.  While Question \ref{q1} remains open in general even for $H=C_4$, we provide some partial results.

Let the length of a shortest odd cycle in a graph $G$ be denoted $g_o(G)$ and called its {\it odd girth}.
For a group $\Gamma=(X, \circ)$ and set $S\subseteq X$, 
the {\it Cayley graph} $Cay(\Gamma, S)$ has vertex set $X$ and edge set $\{\{x, x\circ s\}: s\in S, x\in X\}$. 

\begin{Proposition} Let $G$ be a graph.
\vspace{-0.3cm}
   \begin{itemize}
       \item 
 If $ \chi(G)>2g_o(G)$, then $G^{\square 2} \xrightarrow{2} C_4$.
    In particular, if $|G|< (\chi(G)^2 - 3 \chi(G) +6)/4$, $G^{\square 2} \xrightarrow{2} C_4$.
    \item 
      If $S = \{1,2,11,16\}$ and $G = Cay(\mathbb{Z}_{58}, S)$,  then $\chi(G)=5$ and $G^{\square n} \not\xrightarrow{2} C_4$.
    \end{itemize}
\end{Proposition}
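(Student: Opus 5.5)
The first item should follow from a short pigeonhole argument on the rows of $G^{\square 2}$ indexed by a shortest odd cycle. Write $g=g_o(G)$ and let $i_1i_2\cdots i_g$ be a shortest odd cycle of $G$. Fix a red/blue colouring $c$ of $V(G)^2$. For each $v\in V(G)$, look at the cyclic binary sequence $\bigl(c(i_1,v),\dots,c(i_g,v)\bigr)$. Because $g$ is odd, some cyclically consecutive entries must be equal. Define
\[
\varphi(v)=(t,\,\mathrm{col}),
\]
where $t$ is the least index with $c(i_t,v)=c(i_{t+1},v)$ (indices mod $g$) and $\mathrm{col}$ is that common colour. Then $\varphi$ colours $V(G)$ with at most $2g$ colours. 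Since $\chi(G)>2g$, some edge $uv$ of $G$ has $\varphi(u)=\varphi(v)=(t,\mathrm{col})$. The four vertices $(i_t,u),(i_t,v),(i_{t+1},v),(i_{t+1},u)$ then all have colour $\mathrm{col}$. Since $i_ti_{t+1}\in E(G)$ and $uv\in E(G)$, these four vertices form a $C_4$ in $G^{\square 2}$, so $G^{\square 2}\xrightarrow{2}C_4$.

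For the ``in particular'' part, I would show that $|G|<(\chi^2-3\chi+6)/4$ forces $g_o(G)<\chi/2$. Suppose instead $g_o(G)=2k+1\ge \chi/2$. I would build a proper colouring greedily and count vertices. Pick a vertex $x$ and let $B$ be the ball of radius $k-1$ around $x$. Because $G$ has no odd cycle of length less than $2k+1$, $G[B]$ is bipartite. Remove the last sphere of $B$ together with one further layer, using $2$ fresh colours. The rest of the colour-critical argument would go as follows:
\begin{itemize}
\item Pass to a $\chi$-critical subgraph, so that minimum degree is at least $\chi-1$.
\item Use the minimum-degree bound to show each successive sphere around $x$ has size at least about $\chi-1-2j$ in the $j$th step.
\item Summing over $j\le \chi/2$ should give $|G|\ge (\chi^2-3\chi+6)/4$.
\end{itemize}
This is the Erd\H{o}s-type odd-girth argument. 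The exact constant is where the care goes, so I expect that counting to be a fiddly but routine step.

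For the second item there are two tasks. The first is $\chi(Cay(\mathbb Z_{58},S))=5$. The upper bound should come from an explicit $5$-colouring, for instance one periodic in residues. The lower bound is non-$4$-colourability, which I would verify by a finite, probably computer-assisted, case check. The second task, which is the real content, is a colouring of every power $G^{\square n}$ with no monochromatic $C_4$. I would imitate Construction~\ref{sum_mod_r} and the proof of Proposition~\ref{2r=s}:
\begin{itemize}
\item Choose $f:\mathbb Z_{58}\to\mathbb Z_q$, for example $f(x)=\alpha x \bmod q$ or a carefully tuned lookup.
\item Colour $\x$ according to which half of $\mathbb Z_q$ the sum $\sum_i f(x_i)$ lies in, or by a mod-$4$ rule as in Proposition~\ref{2r=s}.
\item Arrange that along every $G$-edge the sum changes by a value lying in a small set $D$.
\item With $D$ small, monochromatic edges must cross between two adjacent residue classes, and the $C_4$-exclusion argument of Proposition~\ref{2r=s} applies to quadrilaterals spanning two coordinates.
\item Separately check that no single $G$-slice contains a monochromatic $C_4$.
\end{itemize}
The main obstacle is finding such an $f$. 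Since $\chi(G)=5$, $f$ cannot be a proper $4$-colouring, so some edges get difference $0$. Those edges must be prevented from forming $C_4$'s, both inside a slice and together with an edge in another coordinate. I would first try linear maps $\mathbb Z_{58}\to\mathbb Z_{2q}$ that send the generators $1,2,11,16$ to small values, and verify the finitely many local configurations directly.
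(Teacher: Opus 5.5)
Your proof of the main claim of the first item is correct and is the paper's argument. The genuine gap is in the second item: the colourings you propose to search over cannot work. Suppose a colouring depends only on $\sigma(\mathbf x)=\sum_i f(x_i)$ and some edge $uv$ of $G$ has $f(u)=f(v)$. Then $\sigma$ is constant on the square $(u,u),(v,u),(v,v),(u,v)$ of $G^{\square 2}$, and on its copies in every $G^{\square n}$, so this square is a monochromatic $C_4$. Hence zero-difference edges cannot be ``prevented from forming $C_4$'s'', and $f$ must be a proper colouring of $G$.

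This rules out the mod-$4$ rule entirely: only $f \bmod 4$ matters there, and it would be a proper $4$-colouring of a $5$-chromatic graph. With it goes the ``two adjacent residue classes'' mechanism of Proposition~\ref{2r=s}. The threshold (``which half'') rule also fails for small $D$. A square spanning two coordinates has sums $y,\ y-s_1,\ y-s_1-s_2,\ y-s_2$ with $s_1,s_2\in D$, all on a short arc. For large $n$ the remaining coordinates move $y$ through a whole coset of $\langle D\rangle$ in small steps, so some such square falls inside one half.

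The missing idea is to take $f$ to be the identity. Since $G$ is a Cayley graph of the abelian group $\mathbb Z_{58}$, the coordinate sum maps $G^{\square n}$ to $G$ edge-preservingly. The paper colours $\mathbf x$ by $F(x_1+\dots+x_n)$ for one explicit, computer-found $F:\mathbb Z_{58}\to\{0,1\}$ (with $F(x+29)=1-F(x)$) that has no monochromatic $C_4$ in $G$. A $C_4$ inside a slice sees a translate of $F$, which is harmless because translations are automorphisms of $G$. A $C_4$ spanning two coordinates is sent to the $4$-cycle $y,y-s_1,y-s_1-s_2,y-s_2$ of $G$. If $s_2=\pm s_1$ this degenerates to a progression $x-s,x,x+s$ with $s\in\pm S$, which $F$ must also avoid; the given $F$ does. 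All the content of the second item lies in this irregular $F$, which your plan does not provide.

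For the ``in particular'' clause you take a different route from the paper, but as written it is only a sketch. The removal step is not pinned down. If the extra layer is the $k$-th sphere, the union need not be $2$-colourable, because edges inside that sphere only create closed walks of length $2k+1=g_o$. Also, a bound like $\chi-1-2j$ comes from successive deletion rounds, not from spheres around one fixed vertex.

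The paper's argument is shorter. Take a $\chi$-vertex-critical $H\subseteq G$ and $v\in V(H)$, and properly $(\chi-1)$-colour $H-v$. The two smallest classes together with $v$ cannot induce a bipartite graph, since otherwise $H$ would be $(\chi-1)$-colourable. So $g_o\le 2(|G|-1)/(\chi-1)+1$, and $4|G|-4<(\chi-1)(\chi-2)$ then gives $2g_o<\chi$.

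Your idea can also be made rigorous. If $g_o=3$, then $\chi\le 6$ and $|G|\ge\chi\ge(\chi^2-3\chi+6)/4$. If $G$ is triangle-free, repeatedly delete $N(x)$ for a vertex $x$ in a critical subgraph of what is left. This is an independent set of size at least the current chromatic number minus one, and deleting it lowers $\chi$ by at most one. This gives $|G|\ge\chi(\chi-1)/2$, which is more than enough.
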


\begin{proof}
Assume first that $ \chi(G)>2g_o(G)$. Let $c$ be an arbitrary red-blue vertex coloring of $G^{\square 2}$. We denote by $G\times v$ and $v\times G$ the respective slices of $G^{\square 2}$, that are subgraphs induced by $V(G)\times \{v\}$ and $\{v\}\times V(G)$, respectively. 
    Let $C \subseteq G$ be a shortest odd cycle in $G$ on $g_o(G)$ vertices. 
Since $C$ is an odd cycle, for each $v\in V(G)$,  $C\times v$ has an edge whose endpoints have the same color under $c$.
    Define a new coloring $c'$ of $G$ as follows: for each $v \in V(G)$ let $c’(v):= (e, s)$ where $e \in E(C)$ and $s \in  \{\mathrm{red}, \mathrm{blue}\}$ such that in the subgraph  $G \square\{v\}$  of $G^{\square 2}$, the edge corresponding to $e$ has endpoints of color $s$ in $c$. This defines a coloring of $G$ in at most $2|C|$ colors.
Since $2|C| <  \chi(G)$ ,   the coloring $c'$ is not proper, and hence there exists a monochromatic edge $e’$  in $c’$. This yields a monochromatic $C_4$ in $G^{\square 2}$ consisting of two edges corresponding to $e$ and two edges corresponding to $e'$.
    
    Next we shall show that
    $g_o \leq (2|G|-2)/(\chi(G)-1)+1$.
    Let $H \subseteq G$ be a $\chi(G)$-vertex-critical subgraph of $G$ and fix a vertex $v \in V(H)$. Consider the graph $H - v$. By criticality, we have $\chi(H-v) = \chi(G)-1$. 
    Take a proper vertex coloring of $H-v$ with $\chi(H-v) = \chi(G)-1$ colors. The two smallest color classes contain at most $(2|H-v|)/(\chi(G) -1)$ vertices. Then the subgraph of $H$ induced by these vertices together with $v$ is not bipartite, and thus contains an odd cycle $C'$. Hence we have
    $g_o \leq |C'| \leq (2|H-v|)/(\chi(G)-1) +1 \leq (2|G|-2)/(\chi(G)-1) +1.$
Now, we have that $2g_0\leq (4|G|-4)/(\chi(G)-1) +2 $ and we assumed that 
    $|G|< (\chi(G)^2 - 3 \chi(G) +6)/4$, we have that  $ 2g_o <\chi(G)$, so we are done by the first statement. This proves the first part of the proposition.
    \\

Now we consider the Cayley graph $G$ for the second item of the proposition.
    Let $c: V(G^{\square n}) \rightarrow \{0,1\}$ defined as follows.
     Let $c(x_1,...,x_n)=F(x_1+...+x_n)$, where    
    $F: \mathbb{Z}_{58}\rightarrow \{0,1\}$ is a function such that $F(x)=1- F(29+x)$ and is given by its values on $0, 1, \ldots, 28$ in order:
    $0 1 1 0 0 1 0 1 1 0 1 0 0 1 0 1 1 0 0 1 1 0 1 1 0 1 1 0 1$.
    A computer verification gives that $F$ has no monochromatic copy of $C_4$ in $G$ and thus $c$ has no monochromatic copy in $G^{\square n}$ for $n=1$. Moreover, we can observe that a shifted coloring $F_a(x)=F(a+x)$ also has no monochromatic copy of $C_4$ for any $a$.
    
   Consider an arbitrary copy $C$ of $C_4$ in $G^{\square n}$, $n\geq 2$.
   Then $n-1$ or $n-2$ coordinates of the vertices of $C$ are fixed, and the remaining coordinates are ``active". If there is only one active coordinate, say the $n$th one,  we see that $C$ is in a $G$-slice of $G^{\square n}$ obtained by fixing $x_1=x_1^0, x_2=x_2^0, \ldots, x_{n-1}^0$. Then for $a=x_1^0+\cdots +x_{n-1}^0$, we have 
   for any vertex $x=(x_1^0, \ldots, x_{n-1}^0, x_n)$ of $C$ that $c(x)= F(a+x_n)$. Thus $C$ is not monochromatic by a property of $F$.
If there are two active coordinates, say the $n$th and $(n-1)$st, 
let them be  $(a_1,b_1), (a_2,b_1), (a_2,b_2), (a_1,b_2)$ where $a_1-a_2 = s_1 \in S$ and $b_1-b_2 = s_2 \in S$, and let the sum of all other, fixed coordinates be $a$. 
Then the colors on the vertices of $C$ are 
$F(a+a_1+b_1), F(a+a_2+b_1), F(a + a_2 +b_2)$, and $F(a+a_1+b_2)$.
We see that the four elements $a+a_1+b_1, a+a_2+b_1, a + a_2 +b_2$, and $a+a_1+b_2$ form a $C_4$ in the Cayley graph $G$, that is not monochromatic under $F$. Thus $C$ is not monochromatic under $c$. A computer verification also shows that  $\chi(G)=5$.\end{proof}

We note here that our program exhaustively searched for such Cayley graphs with chromatic number 5 in all Abelian groups of order less than 58 and showed that none exist. We wonder if extending the search further can lead to similar examples with a larger chromatic number.

%%%%%%%%%%%%%%%%%%%%%%%%%%%%%%%%%%%%%%%
%%%%%%%%%%%%%%%%%%%%%%%%%%%%%%%%%%%%%%%
\section{Concluding remarks and open questions}\label{conclusions}

In this paper, we investigate the conditions on graphs $G$ and $H$ and integer $r$,  that are necessary or sufficient for the following Ramsey statement:
$G^{\square n} \xrightarrow{r} H$, for some $n\in \mathbb{N}$. \Cref{arrow_layered} states that for any graph $G$ of chromatic number $r+1$ and layered graph $H$, we have $G^{\square n} \xrightarrow{r} H$, some $n\in \mathbb{N}$. \Cref{prop:necessary} states that for some host graphs $G$, including cliques and odd cycles, this sufficient condition is also necessary. It would be interesting to describe all such graphs.

  \begin{Question} \label{Q_r+1}
  For which graphs $G$ of chromatic number $r+1$ the fact that $ G^{\square n} \xrightarrow{r} H$ for some $n \in \N$ implies that $H$ is layered?
 \end{Question}

It is natural to ask about the dependence of $n$ in the statement of \Cref{arrow_layered} on $G,H$, and $r$. More specifically, it is not even clear if $n$ can be chosen to be independent of $G$ in some regimes. The following is an explicit example of such a question. We have that for every $s$ there is an $n$,  
such that $C_{2s+1}^{\square n} \xrightarrow{2} C_6$. Our proof requires a tower-type dependence of $n$ on $s$.

\begin{Question}
    Is there a sufficiently large $n$ such that for all $s \in \N$,  $C_{2s+1}^{\square n} \xrightarrow{2} C_6$?
\end{Question}

Recall that for $r$-colorings of Cartesian powers of cliques $K_s$, the case $r=s/2$ represents a natural threshold: if $r>s/2$ then \Cref{prop:necessary} implies that $K_{s}^{\square n} \xrightarrow{r} H$ may hold only for layered graphs, while if $r<s/2$, then $K_{s}^{\square n} \xrightarrow{r} K_3$ already for $n=1$ by the pigeonhole principle. As for the case $r=s/2$, \Cref{2r=s} implies that if $K_{2r}^{\square n} \xrightarrow{r} H$ for some $n \in \N$, then  $H$ is cubical, $C_4$-free, and $\Theta_{3,3,3}$-free. However, we are not aware of any non-layered graph $H$ such that $K_{2r}^{\square n} \xrightarrow{r} H$ for some $r \ge 2$ and $n \in \N$. We remark that ChatGPT~5.6~Pro claims that an exhaustive search of all $2$-colorings of the vertices of $K_4^{\square 3}$ yields that $K_4^{\square 3} \xrightarrow{2} \{C_3,C_4,B\}$, where $B$ is a certain 3-regular non-layered cubical graph $B$ on $32$ vertices. Even though each of the 3 graphs in the family is a non-layered graph, this computation alone is not enough to answer the following question.

\begin{Question}
    Is there some non-layered $H$ and some $n \in \N$ such that $ K_4^{\square n} \xrightarrow{2} H$?
\end{Question}

The regime $ \chi(G)>2r$ is substantially different since, in addition to layered graphs, sufficiently long odd cycles also become unavoidable in $r$-colored Cartesian powers of $G$. More specifically, \Cref{arrow_cycles} states that for any graph $G$ and positive integers $r,\ell$ such that $G \xrightarrow{r}\{C_3, C_5, ,\dots, C_{2\ell+1}\}$ and for any $s\ge\ell+2$, there exists $n\in \N$ such that $G^{\square n} \xrightarrow{r} C_{2s+1}$. 
 We do not know if the inequality can be further relaxed to $s \ge \ell+1$, or the statement of \Cref{arrow_cycles} is already best possible. We illustrate this problem in the simplest case $\ell=1$.

\begin{Question}
    Is there a graph $G$ such that $G \xrightarrow{2} C_3$ but $G^{\square n} \not \xrightarrow{2} C_5$ for any $n \in \N$?
\end{Question}

We claim that both \Cref{arrow_cycles} and \Cref{grapes} admit natural induced analogues, and the proofs can be repeated verbatim in this case. However, the odd cycles we construct in the proofs of \Cref{arrow_cycles} are never induced, raising the following question.

\begin{Question}
    Suppose that $G \xrightarrow{2} C_3$. Are there $s\ge 2$ and $n \in \N$ such that for every $2$-coloring of the vertices of $G^{\square n}$, there is a monochromatic  induced copy of $C_{2s+1}$ in $G^{\square n}$?
\end{Question}

Our main technical Theorem \ref{grapes} gives in particular a large class $\cF_1$ of such graphs $H$ that are obtained by attaching copies of some graph $F$ with nice Ramsey properties to special edges of a layered graph.
A result in \cite{ALS} also gives a class $\cF_2$ of such graphs $H$ that are obtained by attaching a copy of $F$ to each edge of a graph of zero Tur\'an density in a hypercube.  Note that these classes are distinct,  $\cF_1 \setminus \cF_2 \neq \varnothing$ and 
$\cF_2 \setminus \cF_1 \neq \varnothing$.  For example, when $r=2$, $G=K_5$, and  $H$ is equal to $C_6$ with three attached triangles, we have that $H \in \cF_1\setminus \cF_2$, since $C_6$ has a positive Tur\'an density in a hypercube and $H$ can not be obtained from another layered graph by attaching graphs arrowed by $K_5$. On the other hand, $C_8$  with triangles attached to all edges is in $\cF_2\setminus \cF_1$ since $C_8$ has zero Tur\'an density in a hypercube.  Here, we recall that all even cycles except for $C_4, C_6, $ and $C_{10}$ have zero Tur\'an density in a hypercube, see \cite{Ch, Conder, Conlon, FO, GM}. We do not know if there is a common strengthening of these results, for example whether  $K_5^{\square n} \xrightarrow{2} \widehat{C}_6$, where $\widehat{C}_6$ is a graph obtained from $C_6$ by attaching triangles to all 6 edges.

 We remark that Ramsey properties for vertex colorings of general graphs have been studied, see for example \cite{Folkman1970, Dudek2012, DudekRodl2010, XuLiangRadziszowski2020, NesetrilRodl1976, KurekRucinski1994, LRV}. Edge-Ramsey properties of Cartesian powers of graphs were considered in \cite{GKS}. The vertex-coloring setting in the current paper has connections to edge colorings of hypergraphs since, for example, the vertices of $Q_n$ in layer $r$ correspond to hyperedges of a complete $r$-uniform hypergraph on the vertex set $[n]$.

\noindent
 {\bf Acknowledgements.~~} ChatGPT Plus assisted with proofreading the manuscript and creating the tikz code for the figure.  The first author's work is part of a project supported by the Doctoral Excellence Fellowship Programme (DCEP) and funded by the National Research Development and Innovation Fund of the Ministry of Culture and Innovation and the Budapest University of Technology and Economics.

\end{document}